\theoremstyle{plain}
\newtheorem{theorem}{\bf Theorem}[section]
\newtheorem{lemma}[theorem]{\bf Lemma}
\newtheorem{proposition}[theorem]{\bf Proposition}
\def\a{\alpha}         
  \def\cC{{\mathcal C}}       
\def\d{\delta}
\def\k{\kappa}         
\def\l{\lambda} \def\cM{{\mathcal M}}       
 \def\cN{{\mathcal N}}       
\def\m{\mu}            
\def\n{\nu}            
\def\r{\rho}           
\def\o{\omega}
\def\Z{{\mathbb Z}}    
 \def\N{{\mathbb N}}  
 \def\Q{{\mathbb Q}}
\def\no{\noindent}
\newcommand{\Mg}{\overline{\mathcal{M}}_g}
\newcommand{\Mgnn}{\overline{\mathcal{M}}_{g,2n}}
\newcommand{\Mgn}{\overline{\mathcal{M}}_{g,n}}
\newcommand{\Mgm}{\overline{\mathcal{M}}_{g,m}}
\newcommand{\Mgun}{\overline{\mathcal{M}}_{g+n}}
\newcommand{\Ngn}{\overline{\mathcal{N}}_{g,n}}
\newcommand{\M}{\overline{\mathcal{M}}}
\newcommand{\tNgn}{\tilde{\mathcal{N}}_{g,n}}
\newcommand{\dirr}{\d_{\mathrm{irr}}} 
\def\Pic{\mathop{\mathrm{Pic }}}
\def\Sym{\mbox{Sym}}
\let\geq\geqslant
\let\leq\leqslant
\def\eq{\begin{equation}}
\def\qe{\end{equation}}
\newcommand\blfootnote[1]{%
  \begingroup
  \renewcommand\thefootnote{}\footnote{#1}%
  \addtocounter{footnote}{-1}%
  \endgroup
}
\begin{document}

\title{On the Kodaira dimension of the moduli space of nodal curves}

\author{Irene Schwarz}
\address{Humboldt Universit\"at Berlin, Institut f\"ur Mathematik, 
Rudower Chausee 25, 12489 Berlin, Germany}
\email{schwarzi@math.hu-berlin.de}
\begin{abstract}
\no We show that the compactification of the moduli space of {\em $n-$nodal curves of geometric genus g}, i.e. $\Ngn:= \Mgnn /G$, with $G:=(\Z_2)^n\rtimes S_n$,  is of general type for $g \geq 24$, for all $n \in \N$. While this is a fairly easy result,  it requires completely different techniques to extend it to low genus $5 \leq g \leq 23$. Here we need
that the number of nodes  varies in a band
 $n_{\mathrm{min}}(g) \leq n \leq  n_{\mathrm{max}}(g)$, where $n_{\mathrm{max}}(g)$ is the largest integer smaller than (or in some cases equal to) $\frac{7}{2}(g-1)-3$.
The lower bound  $n_{\mathrm{min}}(g) $ is close to the bound found in \cite{l1}, \cite{f} for $\Mgnn$ to be of general type (in many cases it is identical). This  will be tabled in Theorem  \ref{two} which is the main result of this paper.

\end{abstract}

\thanks{This paper is part of my PhD Thesis written under the supervision of Prof. G. Farkas at Humboldt Universit\"at Berlin, Institut f\"ur Mathematik.  I wish to thank my advisor for suggesting this topic and for his efficient support. I also thank my family for their help in putting this document into acceptable English, and I acknowledge helpful comments of Will Sevin on an earlier version of this paper which led to its present form.}

\blfootnote{2000 Mathematics Subject Classification. 14H10, 14H51, 14D22, 14E99}
\blfootnote{Key words and phrases. Kodaira dimension, nodal curves, moduli space, effective divisors}

\maketitle

\setcounter{page}{1}

\section{Introduction}

The goal of this paper is to study the moduli space $\cN_{g,n}:= \cM_{g,2n}/G$ of {\em $n-$nodal curves of geometric genus g} and its compactification $\Ngn:= \Mgnn /G$. 
Here $\cM_{g,2n}$ is the moduli space of smooth curves of genus $g$ with $2n$ distinct marked points   
on which the group $G:=(\Z_2)^n\rtimes S_n$, with  $S_n$ being the symmetric group, acts in the following way: We group the labels $\{1,...,2n\}$ in pairs 
$(i,n+i),$ where  $ i\leq n$, corresponding to pairs of points $(x_i,y_i)$ (where $y_i=x_{i+n}$). Each of the $n$ copies of $\Z_2$ acts 
on one of these pairs by switching components, and $S_n$ acts by permutation of the pairs. 
Then $G$ acts on the moduli space $\Mgnn$ by acting on the labels of the $2n$ marked points.
We shall henceforth assume without further comment that this grouping is fixed whenever we consider $2n$ marked points. Clearly, $G$ then acts transitively  on the marked points   and  fixed point free on the moduli space.

A special interest in nodal curves, and thus in the study of the moduli space
$\Ngn$, (with the aim of better understanding the general properties of smooth curves) is a common feature of all deformation type arguments. They go back at least to 
Severi who proposed  in \cite{se} to use  $g-$nodal curves 
to prove the Brill-Noether theorem, see  \cite{bn}. Such deformation type methods have become prominent in modern algebraic geometry, e.g. in the systematic theory of limit linear series (see \cite{eh2}) which progressively simplified the earlier deformation type arguments in the first rigorous proof of the Brill-Noether theorem and the Gieseker-Petri theorem (see \cite{gh}, \cite{g},  \cite{eh0}, \cite{eh1},  

\cite{sch1}).

Thus it seems natural  
to study the moduli space $\cN_{g,n}$ of nodal curves in its own right.\\

A crucial point in our analysis is the following diagram (which, incidentally, also shows that $\Ngn$ actually parametrizes $n-$nodal curves with geometric genus $g$ and arithmetic genus $g+n$):
 
  $$\begin{tikzpicture} [baseline=(current  bounding  box.center)]
\matrix (m) [matrix of math nodes,row sep=2em,column sep=3em,minimum width=2em]
{\Mgnn& 	\Mgun\\
 	    \Ngn	&	\\};
\path[->>]
   (m-1-1) edge node[left] {$\pi$} (m-2-1);
\path[->]
	(m-1-1) edge node [above] {$\chi$} (m-1-2)
 	(m-2-1) edge node [below]{$\m$} (m-1-2);
         
\end{tikzpicture}$$
which factors $\chi$ through the quotient map $\pi$, $\chi$ being the map which glues $x_i$ to $y_i$ for every pair $(x_i,y_i)$. Thus each pair is transformed into a nodal point of the curve $C$ under consideration, increasing its arithmetic genus by $n$.

While it was known for a long time that the moduli space $\Mg$ of algebraic curves is uniruled for low genus $g$, Eisenbud, Harris and Mumford showed in the 1980's that $\Mg$ is of general type for $g \geq 24$ (see  \cite{hm} and \cite{eh}), the case of $\overline{\mathcal{M}}_{23}$ being somewhat special (see also \cite{f2}).

Since then there has been a lot of research refining this picture of the birational geometry of moduli spaces of curves. The addition of marked points on the algebraic curve leads to the moduli space $\Mgn$ of $n$-pointed curves of genus $g$ whose Kodaira dimension was studied in \cite{l1}, with some improvements in \cite{f}, 
employing refined versions of the original techniques. Heuristically, additional marked points push the moduli space in the direction of being of general type. In particular,  $\overline{\mathcal{M}}_{23,n}$ is of general type for all  $n\geq 1$, and even for $4\leq g\leq 22$  the moduli space $\Mgn$ achieves general type if $n\geq n_{\mathrm{min}}(g)$ for $n_{\mathrm{min}}(g)$ sufficiently large.
In another direction, the paper \cite{bfv} studies the Kodaira dimension of the Picard variety $\overline{\mbox{Pic}}^d_g$ parametrizing line bundles of degree $d$ on curves of genus $g$.\\

A further gratifying aspect of $\Ngn=\Mgnn/G$ is its structure as a quotient of the well-understood space $\Mgnn$ by a finite group. Such quotients have been studied before in various contexts, and their birational geometry might be different from what one could naively expect.

For instance, the space $\overline{\cC}_{g,n}:=\Mgn /S_n$, for $g$ large,is  of general type for $n \leq g-1$ but uniruled for $n \geq g+1$; only the transitional case $g=n$ is challenging (see \cite{fv1}). In fact, to see this for large $n$, one observes that the fibre of $\Mgn/S_{n}\to \Mg$ over a smooth curve $[C]\in \Mg$ is birational to the symmetric product $C_{n}$. Since the Riemann-Roch theorem implies that any effective divisor of degree $d> g$ lies in some $\mathfrak{g}^1_d$, the quotient  $\Mgn/S_{n}$ is trivially uniruled for $n> g$. 

This proof, of course, uses the very special structure of the above fibre as a symmetric product $C_n$ (which can be interpreted as a family of divisors on $C$ and thus is accessible via Brill-Noether theory) and not just abstract properties of the group $S_n$. Still, it points to the possibility that taking the quotient with respect to a sufficiently large group might somehow destroy the property of an algebraic variety of being of general type. At least for low genus, this is compatible with the findings of this paper, see Theorem \ref{two} below. In our case, such a phenomenon might be related to the existence of an upper finite bound $n_{\mathrm{max}}(g)$ for the number of nodes allowed on the geometric genus $g$ curve 
Note that our group {$G\subset S_{2n}$ is a subgroup of $S_{2n}$, implying that $\Mgnn\to \Mgnn/S_{2n}$ factors through $\Ngn$. Thus the space $\Ngn$ should be somewhat intermediary between
$\Mgnn$ and  $\overline{\cC}_{g,2n}$.

Making this precise  is the central result of the present paper. Most demanding is the result for small $g$, namely:

\begin{theorem}   \label{two}
$\Ngn$ is of general type in the special cases 
$5 \leq g \leq 23,$  and\\
$n_{\mathrm{min}}(g) \leq n \leq  n_{\mathrm{max}}(g)$ 
with  $n_{\mathrm{min}}(g), n_{\mathrm{max}}(g)$  given in the following table:
$$\begin{array}{c|c|c|c|c|c|c|  c|c|c|c|c|  c|c|c|c|c|  c|c|c}
g &5&6&7&8&9&10&11&12&13&14&15&16&17&18&19&20&21&22&23\\
\hline
n_{\mathrm{min}}&9&9&8&8&8&6&6&6&6&5&6&5&5&5&4&4&2&2&1\\
n_{\mathrm{max}}&10&14&18&21&25&28&32&35&38&42&46&49&52&56&60&63&66&70&74
\end{array}$$
\end{theorem}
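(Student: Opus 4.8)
The plan is to reduce the assertion to a single positivity statement for a $G$-invariant divisor class on $\Mgnn$ and then to verify that this positivity holds precisely in the stated band. \emph{Reduction through $\pi$.} The quotient map $\pi\colon\Mgnn\to\Ngn$ is finite and, since $G$ acts freely on the interior, \'etale there; its ramification is therefore confined to the boundary, where it occurs exactly along the divisorial fixed loci of the involutions $\sigma_i\in(\Z_2)^n$. The divisorial fixed locus of $\sigma_i$ is the boundary divisor $\delta_{0;\{x_i,y_i\}}$ whose generic point is a curve with a rational tail meeting the rest in one node and carrying exactly the two points $x_i,y_i$, since the three-pointed $\mathbb{P}^1$ admits the involution exchanging $x_i$ and $y_i$ while fixing the node. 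With ramification index $2$ the reduced ramification divisor is $R=\sum_{i=1}^{n}\delta_{0;\{x_i,y_i\}}$, and the Hurwitz formula gives $K_{\Mgnn}=\pi^*K_{\Ngn}+R$, hence $\pi^*K_{\Ngn}=K_{\Mgnn}-R$. As a finite pullback preserves bigness in both directions, $\Ngn$ is of general type once (i) the $G$-invariant class $K_{\Mgnn}-R$ is big and (ii) pluricanonical forms extend across the singularities of $\Ngn$.

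\emph{Bigness of $K_{\Mgnn}-R$.} Starting from $K_{\Mgnn}=13\lambda+\psi-2\delta$ with $\psi=\sum_{i=1}^{2n}\psi_i$ and $\delta$ the total boundary (with the usual correction along $\delta_1$), subtracting $R$ simply lowers the coefficient of each of the $n$ divisors $\delta_{0;\{x_i,y_i\}}$ from $-2$ to $-3$. To prove the resulting class big I would exhibit an effective $G$-invariant divisor $D$ by pulling back, along the glueing map $\chi=\mu\circ\pi$, a Brill--Noether or Koszul syzygy divisor from $\Mgun=\overline{\mathcal{M}}_{g+n}$, whose class on the arithmetic genus $g+n$ curve has slope $6+\frac{12}{g+n+1}$ (respectively the smaller Koszul slopes needed in low genus), and then write
\[
K_{\Mgnn}-R \;=\; a\,[D]+b\,\lambda+\sum_{i=1}^{2n}c_i\,\psi_i+(\text{effective boundary})
\]
with $a,b,c_i>0$. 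The abundance of $\psi$-classes coming from the $2n$ marked points supplies the positivity that forces general type for $n$ large, while the correction $-R$ and the boundary coefficients of $D$ bound $n$ from above.

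\emph{The band.} The two competing inequalities among $a,b,c_i$ produce the two bounds. The lower bound $n_{\mathrm{min}}(g)$ is governed by the $\lambda$-versus-boundary comparison, i.e.\ essentially the slope condition that already makes $\Mgnn$ of general type, which is why it nearly coincides with the bound of \cite{l1},\cite{f}. The upper bound $n_{\mathrm{max}}(g)$ comes from the boundary coefficients: the extra $-1$ contributed by $R$ along each of the $n$ divisors $\delta_{0;\{x_i,y_i\}}$, together with the boundary part of $D$, must not overwhelm the positive terms, and solving this inequality yields $n<\frac{7}{2}(g-1)-3$, matching the table. Pinning down the constant $\tfrac{7}{2}$ and the integer endpoints requires the sharp slope of the chosen divisor, which is why in the lowest genera the classical Brill--Noether divisor must be replaced by a Koszul divisor of smaller slope, argued genus by genus.

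\emph{Singularities and the main obstacle.} For (ii) I would adapt the Reid--Tai/Harris--Mumford extension argument: the singularities of $\Ngn$ come from automorphisms of the pointed curves parametrised by $\Mgnn$ together with the ramification of $\pi$, and one checks via the age criterion that none imposes an adjunction condition, treating the finitely many exceptional involutions (the analogues of the elliptic-tail involution) separately. The hardest part will be the simultaneous control of the upper bound: since both $R$ and the boundary coefficients of $D$ scale with $n$, establishing bigness all the way up to $n_{\mathrm{max}}(g)$ demands effective divisors of optimal slope and an exact rather than asymptotic accounting of every boundary coefficient. This is precisely where the analysis for $5\le g\le 23$ departs from the easy regime $g\ge 24$ and where the genus-by-genus verification of the table is concentrated.
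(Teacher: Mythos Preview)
Your reduction through $\pi$ and the treatment of the singularities are correct and agree with the paper. The genuine gap is in the bigness step: a \emph{single} effective divisor $D=\chi^*[\overline{\mathfrak{BN}}_{g+n}]$ (or any Koszul replacement) pulled back from $\Mgun$ cannot produce the decomposition you write down. For any class $s\lambda-\delta_0-\cdots$ on $\Mgun$, the gluing pullback $\chi^*$ forces the coefficient of $\psi$ and the coefficient of $-\delta_{\mathrm{irr}}$ in $D$ to be \emph{equal} (both are $d_0$ in the paper's notation). Hence in $K-aD$ the $\psi$-coefficient is $1-ad_0$ and the $\delta_{\mathrm{irr}}$-coefficient is $-2+ad_0$; requiring the first to be positive and the second non-negative is already impossible. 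You therefore cannot get away with one divisor and a residual $b\lambda+\sum c_i\psi_i$.

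The paper resolves this by running \emph{three} families of $G$-invariant effective divisors simultaneously: $B=\phi^*[\overline{\mathfrak{BN}}_g]$ (or its Gieseker--Petri replacement $E$ when $g+1$ is prime), which has $\psi$-coefficient $0$ and absorbs $\delta_{\mathrm{irr}}$; the divisor $D$ (or $F$) from $\Mgun$ that you mention; and, crucially, a third class with \emph{negative} $\lambda$-coefficient, namely the Weierstrass-type divisor $W$ or, for the sharp upper bound, the Minimal Resolution Conjecture divisors $U,V$ from \cite{f}. The upper cut-off does not come from the $-R$ correction together with the boundary of $D$ as you suggest; it arises from the solvability of the $3\times 2$ system in $(y,z)$ given by the inequalities at $\psi$, $\delta_{0;1,0}$ and $\delta_{0;0,2}$, and with $W$ alone one only obtains $n\le 2g-4$. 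The value $\tfrac{7}{2}(g-1)-3$ is achieved only after replacing $W$ by $U$ (for $g$ odd) or $V$ (for $g$ even). Finally, even with $B,D,U,V$ in hand, the lower endpoints of the table are not reached in nine cases $(g,n)\in\{(10,6),(10,7),(21,2),(16,5),(12,6),(22,2),(22,3),(14,5),(18,5)\}$, each of which the paper settles with a further, individually chosen effective divisor.
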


For large $g$ we will show the following:

\begin{theorem} \label{one}
The moduli space $\Ngn$ is of general type for $g\geq 24$.
\end{theorem}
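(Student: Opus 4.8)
The plan is to exploit that $\pi\colon\Mgnn\to\Ngn$ is the quotient by the finite group $G$ acting freely, hence an \'etale cover in codimension one, so that $\pi$ preserves the Kodaira dimension and the statement reduces to the already understood birational geometry of $\Mgnn=\overline{\mathcal{M}}_{g,2n}$.

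First I would check that $G$ acts freely away from a closed subset of codimension at least two. Since the $2n$ marked points are distinct and labelled, a point $[C,p_1,\dots,p_{2n}]$ can be fixed by a nontrivial $\sigma\in G$ only if $C$ admits a nontrivial automorphism $\varphi$ with $\varphi(p_i)=p_{\sigma(i)}$ for all $i$. For $g\ge 24$ the locus of curves carrying a nontrivial automorphism already has codimension $g-2\gg 2$ in the interior, and on the boundary the requirement that a symmetric configuration of marked points exist is of positive codimension within each boundary divisor; hence no boundary divisor is generically fixed. Consequently the ramification divisor of $\pi$ is empty, and the Hurwitz formula gives the identity of $\mathbb{Q}$-divisor classes $\pi^{*}K_{\Ngn}=K_{\Mgnn}$.

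Next I would use that the Kodaira dimension is preserved under finite surjective morphisms: for the map $\pi$, \'etale in codimension one, and the canonical class $K_{\Ngn}$ one has $\kappa(\Ngn)=\kappa(\Ngn,K_{\Ngn})=\kappa(\Mgnn,\pi^{*}K_{\Ngn})=\kappa(\Mgnn,K_{\Mgnn})=\kappa(\Mgnn)$; equivalently, the pluricanonical forms on $\Ngn$ are precisely the $G$-invariant pluricanonical forms on $\Mgnn$, so bigness of $K_{\Mgnn}$ forces bigness of $K_{\Ngn}$. Moreover, an \'etale quotient of a variety with canonical singularities again has canonical singularities, so the notion of general type for the coarse space $\Ngn$ is the naive one; thus $\Ngn$ is of general type exactly when $\Mgnn$ is.

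It then remains to prove that $\Mgnn=\overline{\mathcal{M}}_{g,2n}$ is of general type for every $n$ once $g\ge 24$, which is the only external input. By Eisenbud--Harris--Mumford (\cite{hm}, \cite{eh}) the space $\Mg$ is of general type for $g\ge 24$, and this property is inherited under the addition of marked points: for the universal curve $\phi\colon\overline{\mathcal{M}}_{g,m+1}\to\Mgm$ one has $K_{\overline{\mathcal{M}}_{g,m+1}}=\phi^{*}K_{\Mgm}+\omega_{\phi}$, and $\omega_{\phi}$ restricts on each fibre to the dualizing sheaf of a genus $g\ge 2$ curve, of positive degree, so that the fibres are of general type; subadditivity of the Kodaira dimension over a base of general type then gives, by induction on $m$, that $\Mgm$ is of general type for all $m$---this is precisely the result of Logan \cite{l1} (with the refinements of \cite{f}) in the range $g\ge 24$. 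Combined with the previous paragraphs this proves the theorem. The step I expect to be most delicate is the codimension estimate for the non-free locus along the boundary $\partial\overline{\mathcal{M}}_{g,2n}$: one must verify that $\pi$ is genuinely unramified in codimension one, so that no discrepancy term spoils the identity $\pi^{*}K_{\Ngn}=K_{\Mgnn}$; once this is secured, the remaining arguments are formal.
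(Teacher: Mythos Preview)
Your argument contains a genuine error at its most delicate point---and you correctly flagged that point as the one to worry about. The quotient map $\pi\colon\Mgnn\to\Ngn$ is \emph{not} unramified in codimension one: its ramification divisor is exactly $\delta_{0;1,0}$, the boundary component whose generic point is a curve with a smooth rational tail carrying precisely one of the distinguished pairs $(x_i,y_i)$. On such a tail (a $\mathbb{P}^1$ with one node and two marked points) there is always an automorphism fixing the node and swapping the two marks, so the transposition $(i,n{+}i)\in G$ fixes the \emph{entire} divisor $\Delta_{0,\{i,n+i\}}$, not just a proper sublocus of it. Thus $\pi^{*}K_{\Ngn}=K_{\Mgnn}-\delta_{0;1,0}$, and bigness of $K_{\Mgnn}$ no longer implies bigness of $K_{\Ngn}$. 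Your interior estimate is fine; it is precisely the boundary claim ``no boundary divisor is generically fixed'' that fails.

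This is not a cosmetic issue: the paper's introduction recalls that $\Mgn/S_n$ is uniruled for $n>g$ even when $\Mgn$ is of general type, so quotients by groups whose transpositions lie in $G$ can genuinely lower the Kodaira dimension. The paper circumvents the problem altogether by a different route: it fibres $\Ngn$ over $\Mg$ via the forgetful map, identifies the general fibre over $[C]$ with $\Sym^{n}(\Sym^{2}C)$, and uses that the Hilbert scheme $\mathrm{Hilb}^{n}(\Sym^{2}C)$ is a crepant resolution which is of general type because the surface $\Sym^{2}C$ is (this needs only $g\ge 3$). Since $\Mg$ is of general type for $g\ge 24$, the Iitaka-type inequality $\kappa(\Ngn)\ge\kappa(\Mg)+\kappa(F)$ then finishes. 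If you want to salvage a divisor-based argument on $\Mgnn$, you must show that $K_{\Mgnn}-\delta_{0;1,0}$ is big, which is a strictly harder statement than bigness of $K_{\Mgnn}$ and is essentially what the low-genus part of the paper does by hand.
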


The crucial reason for Theorem \ref{one} is that $\Mg$ is of general type for $g\geq 24$. The theorem could automatically be improved if there were proof of $\Mg$ being of general type for some $g<24$. Then $\Ngn$ also is of general type, by an identical proof.
We emphasize that in this case our result is uniform in  the number $n$ of nodes. There is no finite upper bound $n_{\mathrm{max}}(g)$. The proof (presented in Section \ref{p1}) uses arguments of fairly general nature and does not require elaborate calculations with carefully selected divisors.
This changes drastically in low genus $g \leq23$.
It is clear from the above diagram that $\Ngn$ can only be of general type if $\Mgnn$ is. 
In our opinion it is easier to show that $\Mgnn$ is of general type than to establish this property for $\Ngn$. Thus we have not even attempted to consider those values of $g,n$ where $\Mgnn$ is not known to be of general type.
Thus, for low genus, we have considered individually the cases in which $\Mgnn$ is known to be of general type (see \cite{l1} and \cite{f}).
For the convenience of the reader we shall recall:\\

$\Mgnn$ is of general type 
for $5 \leq g \leq 23,$  and
$n \geq n_{\mathrm{min}}(g) , $ 
with  $n_{\mathrm{min}}(g$) 
being given in the following table:
$$\begin{array}{c|c|c|c|c|c|c|  c|c|c|c|c|  c|c|c|c|c|  c|c|c}
g &5&6&7&8&9&10&11&12&13&14&15&16&17&18&19&20&21&22&23\\
\hline
n_{\mathrm{min}}&8&8&8&7&7&6&6&6&6&5&5&5&5&5&4&3&2&2&1

\end{array}$$
\\

In order to obtain the analogue  table for $\Ngn$ in Theorem \ref{two},
our approach  involves deriving sufficient conditions for the property of being of general type in terms of divisors invariant under the group action. Basic building blocks are the Brill-Noether divisors on $\Mg$ and $\Mgun$, appropriately pulled back, and divisors of Weierstrass-type (see Section 4). For $g+1$ or $g+n+1$ prime, the Brill-Noether divisors have to be replaced by less efficient divisors which we take from \cite{eh}. This gives, in any case,  a system of linear inequalities which is solvable for certain values of  $(g,n)$. This solvability is a sufficient condition for $\Ngn$ being of general type. In spirit our analysis is related to \cite{fv1,fv2,fv3}, but  the detailed analysis is  different and requires new ideas.  
The explicit solution of the relevant systems of inequalities is best done using computer algebra, paying special attention to a number of limiting cases where the appropriate divisors have to be chosen with care.

We further remark that, in the  table of Theorem \ref{two}, the upper cut-off at $n_{\mathrm{max}}(g)$ is the largest integer smaller than (or in some cases equal to) $\frac{7}{2}(g-1)-3$.
and that, using this divisor based computational approach to prove general type in high genus $g \geq 24$ - avoiding the completely different arguments in the proof of Theorem \ref{one} - would give a much weaker result than Theorem \ref{one},  since one only gets general type for $n$ bounded by the same $n_{\mathrm{max}}(g)$. This appearance of an upper bound for $n$ in Theorem \ref{two} is the main difference to both our Theorem \ref{one} and the results in the low-genus table for $\Mgn$.   We remark that in order to improve our result on the upper cut-off with the techniques of this paper, one needs a new type of effective divisor $A$ on $\Mgnn$  of the form (using our notation established in Section 4)
$$ a \lambda - b_{irr} \d_{irr} - \sum_{i,S} b_{i,S} \d_{i,S},$$
with all coefficients nonnegative and $b_{0,S} \neq 0$ for $|S|=2$, or of the form
$$-a \lambda +c \psi - b_{irr} \d_{irr} - \sum_{i,S} b_{i,S} \d_{i,S},$$
with all coefficients nonnegative and $b_{0,S} > 3 c$ for $|S|=2$. We do not know of any such divisor. In any case, at present it is not clear if our result is sharp.
To clarify this question is, in our opinion, the most interesting point left open by the results of our present paper.

 The outline of our paper is as follows. In Section 2 we prove Theorem \ref{one}. In Section \ref{sc} we give sufficient conditions for our statement on $\Ngn$ being of general type in terms of an appropriate decomposition of the canonical divisor $K_{\Ngn}$.  
 In Section \ref{div} we introduce most of the divisors needed in the subsequent analysis (i.e. all the standard divisors used in the next section), while in Section \ref{system} we derive most of the table (the standard part of it) of Theorem \ref{two} by  solving  the ensuing system of inequalities. This proof is the least technical. 
It leaves open a number of special cases (precisely 9), each of which requires additional special divisors and a lot of special attention. This is the content of Section \ref{special} and concludes the proof of Theorem \ref{two}.

Finally we remark that, though not being strictly necessary, it  is 
 convenient to employ in Sections 5-6 a standard computer algebra program, e.g. Mathematica or Maple.

\section{Proof of Theorem \ref{one}} \label{p1}
\setcounter{equation}{0}
We shall prove the assertion by representing $\Ngn$ via a base and a general fibre. Recall that for any surjective proper morphism of normal projective varieties $\phi: Y \to X$ with general fibre $F$ the Kodaira dimension satisfies $\k(Y) \geq \k(X)+\k(F)$. In particular, if $F$ and $X$ are of general type, then $Y$ is of general type.

Now consider the projection 
$$\phi: \Ngn= \Mgnn/G \to\Mg,$$
with general fibre 
$$F  \simeq C^{2n}/G \simeq \Sym ^{n}(\Sym^2(C)) =(C^2/\Z_2)^n/S_n,$$ 
where $ \Sym^m(C):= C^m/S_m$ denotes 
the $m$-fold symmetric product of the curve, and the crepant resolution given by  the Hilbert scheme (see e.g. \cite{hl})
$$\mbox{Hilb}^n(S) \to \Sym^n(\Sym^2(C)), \qquad S=\Sym^2(C).$$
This is of general type if $S$ is of general type. It is well known that $\Sym^d(C)$ is of general type for $d\leq g-1$. Combining these results we get that both the base $\Mg$ and the general fibre $F$ are  of general type, completing the proof.
For the sake of the reader we recall the argument that $\Sym^d(C)$ is of general type for $d\leq g-1$ based on results of \cite{kou}. 

Letting $C$ be an irreducible smooth genus $g$ curve, we consider for $d\leq g-1$ the canonical projection
$$\pi : C^d \to C_d:=\Sym^d(C),$$
which induces a canonical map $D\mapsto \mathcal{L}_D$ from divisors on $C$ to divisors on $C_d$. Denoting by $K$ the canonical divisor on $C$ and by $\Delta /2$ the ramification divisor of $\pi$, Prop. 2.6 of \cite{kou} gives the canonical divisor on $C_d$ as
\begin{equation}\label{canonical class Cd}
K_{C_d}=\mathcal{L}_K - \Delta /2.
\end{equation}
Using the standard interpretation of points in $C_d$ as effective divisors on $C$ of degree $d$ and fixing some effective divisor $D$ of degree $g-1-d$ gives a map
$$\alpha_D: C_d \to C_{g-1}, \quad A \mapsto A+D.$$
Composing $\a_D$ with the Abel-Jacobi map
$$u: C_{g-1}\to J^{g-1}(C),$$
and taking $\theta$ to be the $\theta$-divisor on the Jacobi torus $J^{g-1}(C)$, Prop. 2.3 and Prop. 2.7. of \cite{kou} give
$$u^* \a_D^* \theta =\mathcal{L}_K-\Delta /2 -\mathcal{L}_D.$$
Combining this with \eqref{canonical class Cd} yields
$$ K_{C_d}= u^* \a_D^* \theta +\mathcal{L}_D,$$
representing the canonical divisor of $C_n$ as a sum of an ample divisor (because $\theta$ is ample) and an effective divisor. This is a well known criterion for $C_d$ to be of general type (see e.g. our Section \ref{sc} below).

\section{Moduli spaces of nodal curves} \label{sc}
\setcounter{equation}{0}

The aim of this section is to develop a sufficient condition for $\Ngn$ being of general type. This requires a basic understanding of the Picard group $\Pic( \Ngn)$ and an explicit description of the  boundary divisors and tautological classes on $\Ngn$ which we shall always consider as $G$-invariant divisors on $\Mgnn$ (any such divisor descends to a divisor on $\Ngn$).
For results on $\Mgn$ we refer to the book \cite{acg} (containing in particular the relevant results from the papers \cite{ac1} and \cite{ac2}). All Picard groups are taken with rational coefficients and, in particular, we identify the Picard group on the moduli stack with that of the corresponding coarse moduli space.
\\

In particular, we recall the notion of the Hodge class $\l$ on $\Mgn$, which automatically is $G-$ invariant and thus  gives the Hodge class $\l$ on $\Ngn$ (where, by the usual abuse of notation, we denote both classes by the same symbol).

In order to describe the relevant boundary divisors on $\Mgn$, we recall that $\Delta_0$ (sometimes also called $\Delta_{\mathrm{irr}}$) on 
$\Mg$ is the boundary component consisting of all (classes of) stable curves of arithmetical genus $g$, having at least one nodal point with the property that ungluing the curve at this node preserves connectedness. Furthermore, $\Delta_i$,  for $1 \leq i \leq \lfloor\frac{g}{2}\rfloor,$ denotes the boundary component of curves possessing a node of order $i$ (i.e. ungluing at this point decomposes the curve in two connected components of arithmetical genus $i$ and $g-i$ respectively). Similarly, on $\Mgn$ and for any subset $S \subset \{1, \ldots,n\}$, we denote by $\Delta_{i,S}, 0 \leq i \leq \lfloor\frac{g}{2}\rfloor,$ the boundary component consisting of curves possessing a node of order $i$ such that after ungluing the connected component of genus $i$ contains precisely the marked points labeled by $S$. Note that, if $S$ contains at most 1 point, one has $\Delta_{0,S}= \emptyset$ (the existence of infinitely many automorphisms on the projective line technically violates stability). Thus, in that case, we shall henceforth consider $\Delta_{0,S}$ as the zero divisor.

We shall denote by $\delta_i, \delta_{i,S}$ the rational divisor classes of $\Delta_i, \Delta_{i,S}$ in $\Pic \Mg$ and $\Pic \Mgn$, respectively. Note that $\delta_0$ is also called $\delta_{\mathrm{irr}}$ in the literature, but we shall reserve the notation $\delta_{\mathrm{irr}}$ for the pull-back of $\d_0$ under the forgetful map
$\phi:\Mgn \to \Mg$.

Next we recall the notion of the point bundles $\psi_i, 1 \leq i \leq n,$ on $\Mgn$. Informally, the line bundle $\psi_i$ (sometimes called the cotangent class corresponding to the label $i$) is given by choosing as fibre of $\psi_i$ over a point $[C;x_1, \ldots, x_n]$ of $ \Mgn$ the cotangent line $T_{x_i}^v(C)$. For later use we also set
\begin{equation}  \label{basechangeprep}
\omega_i:= \psi_i - \sum_{S \subset \{1, \ldots,n \}, S \ni i} \d_{0,S},
\end{equation}
and introduce  $\psi=\sum_{i=1}^n \psi_i.$  Clearly, on $\Mgnn,$ the class $\psi$ is $G-$ invariant.\\

We shall now consider the action of $G$ on these divisor classes on $\Mgnn.$ Clearly, $\l, \psi$ and $\delta_{\mathrm{irr}}$ are $G -$ invariant. Furthermore, $\d_{i,S}$ is mapped into $\d_{j,T}$ by some element of $G$ if and only if $i=j, |S|=|T|$ and $S$ and $T$ contain the same number of pairs. This implies that one gets a $G-$ invariant  divisor class by setting
$$\d_{i;a,b}= \sum_S \d_{i,S},$$
where the sum is taken over all subsets $S$ containing precisely $a$ pairs and $b$ single points.\\

As a first step in the direction of our sufficient criterion we need the following result on the geometry of the moduli space $\Ngn.$

\begin{theorem}  \label{noadcon}
The moduli space $\Ngn$ has only {\em canonical singularities} or in other words:
The singularities of $\Ngn$ do not impose {\em adjunction conditions}, i.e. if 
$\r: \tNgn  \to \Ngn$ is a resolution of singularities, then for any  $\ell \in \N$
there is an isomorphism
\begin{equation}
\r^*: H^0((\Ngn)_{\mathrm{reg}}, K_{(\Ngn)_{\mathrm{reg}}}^{\otimes \ell}) \to H^0(\tNgn,K_{\tNgn}^{\otimes \ell}).
\end{equation}
Here $(\Ngn)_{\mathrm{reg}}$ denotes the set of regular points of $\Ngn$, considered as a projective variety, and $K_{\tNgn},  K_{(\Ngn)_{\mathrm{reg}}}$ denote the canonical classes on $\tNgn$ and  $(\Ngn)_{\mathrm{reg}}$.
\end{theorem}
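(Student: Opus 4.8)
The plan is to show that the singularities of $\Ngn$ are canonical and then invoke the standard fact that canonical singularities do not impose adjunction conditions, so that pluricanonical forms extend across both the singular locus and the exceptional locus of a resolution. Concretely, recall from the introduction that $G=(\Z_2)^n\rtimes S_n$ acts on $\Mgnn$, and the quotient map $\pi:\Mgnn\to\Ngn$ exhibits $\Ngn$ as a finite quotient. Since $\Mgnn$ itself has (at worst) canonical singularities by the work of Harris--Mumford and the extension to pointed spaces in \cite{acg}, the geometry of $\Ngn$ near any point is governed by the germ of the $G$-action on $\Mgnn$ near a preimage. Thus the first step is to reduce to a purely local statement about quotient singularities: describe, for a point $[C;x_1,\dots,x_{2n}]\in\Mgnn$, its stabilizer $H\subseteq G$ and the induced linear action of $H$ on the tangent space (equivalently, on a local model of deformations of the pointed curve), and show this action produces a canonical singularity.

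The second and central step is to apply the Reid--Tai criterion. For a cyclic action generated by $g\in H$ of order $m$, diagonalized with eigenvalues $e^{2\pi i a_j/m}$ on the tangent space ($0\le a_j<m$), the quotient singularity is canonical precisely when the age $\frac1m\sum_j a_j\ge 1$ for every such $g\neq\mathrm{id}$ (and terminal when strict). The key observation is that $G$ is built from two kinds of generators: the involutions in the $(\Z_2)^n$ factor, each swapping a pair $(x_i,y_i)=(x_i,x_{n+i})$, and the permutations in $S_n$ permuting the pairs. I would verify the Reid--Tai inequality for each conjugacy-class type of element. The transpositions of marked points and the pair-swaps act as reflections or pseudoreflections in certain coordinate directions (the marked-point positions), and crucially they act with quasi-reflection-type eigenvalues only on a low-codimension piece; one must confirm that whenever an element fixes a point, the sum of its ages is at least $1$. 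The fact that we are taking a $2n$-pointed curve with $2n\ge 2$ and $g\ge 5$ gives plenty of tangent directions coming from $H^1(C,T_C)$ (deformations of the curve) on which the permutation part of $H$ acts, boosting the age; the swaps contribute ages from the way $\Z_2$ permutes the two cotangent/tangent directions at the glued points.

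A subtlety I expect to be the main obstacle is the treatment of \emph{quasi-reflections} (elements fixing a divisor, i.e.\ acting with a single nontrivial eigenvalue $-1$ or with age $<1$), since the presence of such elements in a stabilizer can force the quotient to be non-canonical, or at least requires the well-known correction: one must argue that the locus where $G$ acts with quasi-reflections does not obstruct extension. Here the hypothesis that $G$ acts \emph{fixed point free on the moduli space} (stated explicitly in the introduction, meaning freely on $\Mgnn$ away from loci of curves with extra automorphisms) is exactly what rules out the problematic stabilizers coming purely from the group action; the only stabilizers that arise are the automorphism groups of the underlying pointed curves, and for these the Harris--Mumford analysis already establishes the Reid--Tai inequality on $\Mgnn$. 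I would therefore organize the argument as: (i) points with trivial $G$-stabilizer inherit canonical singularities directly from $\Mgnn$; (ii) points with nontrivial stabilizer require checking that the combined action of the curve automorphisms and the relevant elements of $G$ still satisfies Reid--Tai, carefully excluding quasi-reflections by the freeness of the $G$-action on the point-labelling. Once canonicity is established everywhere, the claimed isomorphism $\r^*$ follows from the standard theory of canonical singularities (pluricanonical sections extend over a resolution, and the regular-locus sections agree with those on the smooth variety), which is the criterion already alluded to at the end of Section \ref{p1}.
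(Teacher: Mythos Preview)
Your approach via the Reid--Tai criterion is the same as the paper's in spirit, but the paper shortcuts the entire verification: Farkas and Verra in \cite{fv1} already check the Reid--Tai inequality for the quotient $\Mgn/S_n$ by the \emph{full} symmetric group, and since their argument verifies the age condition for every automorphism of a pointed curve permuting the marked points by \emph{any} permutation, the same verification covers every subgroup $G\subset S_{2n}$. The paper simply observes this and is done; no new case-by-case analysis of conjugacy classes in $(\Z_2)^n\rtimes S_n$ is needed.

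Your treatment of quasi-reflections, however, rests on a false premise and constitutes a genuine gap. You invoke the claim that $G$ acts fixed-point freely on the moduli space to argue that the only stabilizers arise from curve automorphisms \emph{fixing} the marking, so that the Harris--Mumford/Logan analysis on $\Mgnn$ already suffices. But the freeness asserted in the introduction refers to the open locus $\cM_{g,2n}$ (and even there only generically, away from curves with extra automorphisms); on the compactification $\Mgnn$ the action is \emph{not} free. Concretely, if $C=C'\cup_p\mathbb{P}^1$ has a rational tail carrying exactly the pair $(x_i,y_i)$, the involution in $G$ swapping $x_i\leftrightarrow y_i$ fixes $[C;x_1,\dots,x_{2n}]$ and acts on the tangent space with a single nontrivial eigenvalue $-1$ on the node-smoothing direction---age $\tfrac12$, a genuine quasi-reflection. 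This is exactly why the ramification divisor of $\pi$ is $\d_{0;1,0}$, as computed later in the paper. So quasi-reflections do occur, they cannot be excluded by any freeness assertion, and step (ii) of your outline cannot be reduced to the Harris--Mumford case. The standard remedy (first pass to the quotient by the pseudoreflection subgroup, then apply Reid--Tai to the residual action) is precisely the extended analysis carried out in \cite{fv1}; citing that result, as the paper does, is both correct and far more efficient than redoing it.
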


The proof  
follows the lines of the 
the proof of Theorem 1.1 in \cite{fv1}. 
We shall briefly review the argument. A crucial input is Theorem 2 of the seminal paper \cite{hm} which proves that the moduli space $\Mg$ has only canonical singularities. The proof relies on the Reid-Tai criterion: Pluricanonical forms (i.e. sections of $K^{\otimes \ell}$) extend to the resolution of singularities, if for any automorphism $\sigma$ of an object of the moduli space the so-called {\em age} satisfies  $age(\sigma) \geq 1$. The proof in \cite{fv1} then proceeds to verify the Reid-Tai criterion for the quotient of $\Mgn$ by the full symmetric group $S_n $. Here one specifically has to consider those automorphisms of a given curve which act as a permutation of the marked points. For all those automorphisms the proof in \cite{fv1} verifies the   Reid-Tai criterion. Thus, in particular, the criterion is verified for all automorphisms which act on the marked points as an element of some subgroup of $S_n$. Thus, the proof in \cite{fv1} actually establishes the existence of only canonical singularities for {\em any} quotient $\Mgn/G$ where $G$ is a subgroup of $S_n$. Clearly, this covers our case.

Theorem \ref{noadcon} implies that the Kodaira dimension of $\Ngn$ equals the Kodaira-Iitaka dimension of the canonical class $K_{\Ngn}$. In particular, $\Ngn$ is of general type if  $K_{\Ngn}$ is a positive linear combination of an ample and an effective  rational class on $\Ngn$.  It is convenient to slightly reformulate this result. We need

\begin{proposition} \label{psi}
The class $\psi$ on $\Mgnn$ is the pull-back of a divisor class on $\Ngn$ which is big and nef.
\end{proposition}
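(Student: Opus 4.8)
The plan is to show that $\psi = \sum_{i=1}^n \psi_i$ descends along the quotient map $\pi : \Mgnn \to \Ngn$ and that its descent is big and nef. Since $\psi$ is $G$-invariant (as already noted after \eqref{basechangeprep}) and $G$ acts freely on $\Mgnn$, the class $\psi$ is indeed the pull-back of a well-defined rational class, which I shall also call $\psi$, on $\Ngn$; this takes care of the first assertion. The substance of the proposition is therefore the bigness and nefness of this descended class, and here the natural strategy is to relate $\psi$ on $\Mgnn$ to the geometry of the gluing map $\chi$ and the factorization $\chi = \mu \circ \pi$ recorded in the diagram of the introduction.

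First I would exhibit $\psi$ as (a pull-back of) a tautological class under the clutching morphism $\chi : \Mgnn \to \Mgun$. Gluing the paired points $x_i$ to $y_i$ turns each pair into a node of the arithmetic-genus-$(g+n)$ curve, and the cotangent classes $\psi_{x_i},\psi_{y_i}$ at the two branches of the $i$-th node are precisely what the standard self-intersection/clutching formulas on $\overline{\mathcal M}$ control. The key point I expect to use is that nefness and bigness descend nicely: if $\psi$ is the pull-back under $\chi$ of a nef (resp. big) class on the target, then because $\mu$ is the induced finite map with $\mu \circ \pi = \chi$, the descended class on $\Ngn$ is the $\mu$-pull-back of that same class and inherits nefness (finite pull-backs of nef classes are nef) and bigness. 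So I would reduce everything to producing, on $\Mgun$ (or on a suitable boundary-stratum image), a nef-and-big class whose pull-back agrees with $\psi$.

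For nefness I would argue directly by intersecting $\psi$ with an arbitrary complete curve $B \to \Ngn$, lifting it to a family over $\Mgnn$: each $\psi_i$ is the first Chern class of a relative cotangent line, and on a family of stable curves the sum of cotangent classes at the marked points (or, after gluing, at the nodes) is nef because each $\psi_i$ is a limit of semiample classes — indeed on $\Mgn$ each $\psi_i$ is known to be nef, and nefness is preserved under summation and under the finite quotient. For bigness I would compute a top self-intersection or, more robustly, write $\psi$ as the sum of a nef class and a genuinely positive (e.g. strictly ample on a dense open) contribution, using that the $\psi$-classes separate the positions of the nodes and hence the map they induce is generically finite onto its image; bigness then follows from the top-degree self-intersection $\psi^{\dim \Ngn}>0$, or from exhibiting $\psi$ as nef with positive volume via the surjectivity of the associated linear system on a dense open set.

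The hard part will be the bigness: nefness is essentially formal once $\psi_i$ are known to be nef and the descent compatibilities are set up, but to prove $\psi$ is big I must rule out that $\psi$ contracts a positive-dimensional family, i.e. verify that the morphism defined by (a multiple of) $\psi$ does not drop dimension. Concretely the obstacle is controlling the behaviour of $\psi$ along the boundary strata where the cotangent lines can degenerate; I would handle this by checking that on the interior $\cN_{g,n}$ the classes $\psi_i$ already induce a generically finite map (the node positions are moduli), so that $\psi$ restricted to the interior is big, and then invoke that bigness is an open condition insensitive to lower-dimensional boundary loci. Making the generic finiteness precise — and confirming the self-intersection number is strictly positive rather than merely nonnegative — is where the real care is needed.
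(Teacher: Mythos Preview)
Your outline has the right overall shape (descend $\psi$, then verify nef and big on the quotient), but it diverges from the paper's argument and leaves the hard step unfinished.

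The paper's proof is a two-line reduction: since $G\subset S_{2n}$, the quotient map factors as $\Mgnn\xrightarrow{\pi}\Ngn\xrightarrow{\nu}\Mgnn/S_{2n}$. Farkas--Verra \cite{fv2} have already shown that $\psi$ descends to a big and nef class $N_{g,2n}$ on $\Mgnn/S_{2n}$; pulling this back along the \emph{finite surjective} map $\nu$ immediately gives a big and nef class on $\Ngn$ whose further pull-back to $\Mgnn$ is $\psi$. No direct computation of self-intersection or generic finiteness is needed.

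Your route through the gluing map $\chi=\mu\circ\pi$ does not work as stated. The map $\mu:\Ngn\to\Mgun$ has image contained in the boundary locus of $n$-nodal curves, which is a proper closed substack; you cannot produce bigness on $\Ngn$ by pulling back a big class from $\Mgun$ along a non-dominant map. So the paragraph proposing to ``reduce everything to producing, on $\Mgun$, a nef-and-big class whose pull-back agrees with $\psi$'' is a dead end.

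Your direct argument for nefness (each $\psi_i$ is nef, hence $\psi$ is nef, and nefness descends under finite surjective quotients) is fine. But your bigness argument is only a sketch: you would need to actually prove that the linear system $|m\psi|$ induces a generically finite map, or that $\psi^{\dim}>0$, and you concede this is where ``the real care is needed''. The paper sidesteps this entirely by citing \cite{fv2}. If you wanted to stay self-contained, the cleanest fix is to note that $\psi$ is already known to be big and nef on $\Mgnn$ and that both properties descend along the finite surjective map $\pi$ (bigness via multiplicativity of volume); but you should cite a source for bigness of $\psi$ upstairs rather than leave it as an exercise.

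Finally, a small correction: $G$ does \emph{not} act freely on $\Mgnn$; the paper computes the ramification divisor of $\pi$ to be $\delta_{0;1,0}$. This does not obstruct descent of $\mathbb{Q}$-classes, but the statement as written is false. (Also, $\psi=\sum_{i=1}^{2n}\psi_i$, not $\sum_{i=1}^n$.)
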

 
\begin{proof}
 Farkas and Verra have proven in  Proposition 1.2 of \cite{fv2} that the $S_{2n}$-invariant class $\psi$ descends to a big and nef divisor class $N_{g,2n}$ on the quotient space $\Mgnn /S_{2n}$. Consider the sequence of  natural projections
 $\Mgnn \xrightarrow{\pi} \Mgnn/G \xrightarrow{\n} \Mgnn/S_{2n}.$
Then $\n^*(N_{g,2n})$ is a big and nef divisor on $\Ngn=\Mgnn/G$ and $\pi^*(\n^*(N_{g,2n})=\psi$. 
\end{proof}

 Now observe that the ramification divisor (class) of the quotient map $\pi$ is precisely $\d_{0;1,0}$. In fact a quotient map $X\to X/G$ is ramified in a point $x\in X$ exactly if there exists a non-trivial element $g\in G$ such that $g(x)=x$. In our case an element $g\in G$ acts on a class of pointed curves by $g[C,x_1,\ldots, x_{2n}]=[C;x_{g(1)},\ldots, x_{g(2n)}]$. By standard results  the pointed curves $(C,x_1,\ldots, x_n)$ and $(C;x_{\sigma (1)},\ldots, x_{\sigma (2n)})$ are isomorphic for some $\sigma\neq id \in S_n$ , if and only if $C$ has a rational component with exactly two marked points and $\sigma$ is the transposition switching these two points. Since our group $G$ contains exactly those transpositions belonging to the pairs $(x_i,y_i)$, the ramification divisor is $\d_{0;1,0}$.
 
 Furthermore, standard results on the pullback give

\begin{equation}\label{K}
K:= \pi^*(K_{\Ngn})= K_{\Mgnn} - \mbox{ram } \pi= 13 \l + \psi - 2 \d -\d_{1,\emptyset}- \d_{0;1,0}.
\end{equation} 

Here $\d$ is the class of the boundary of $\Mgnn$, i.e.
$$ \d = \dirr + \sum_{0 \leq i \leq \lfloor\frac{g}{2}\rfloor} \sum_{S \subset \{ 1, \ldots ,2n \} }
\d_{i,S}. $$

 We thus obtain the final form of our sufficient condition:
 If  $K$ is a positive multiple of $\psi$ + some effective $G - $ invariant divisor class on $\Mgnn$, then $\Ngn$ is of general type.

\section{Effective $G$-invariant divisors on $\Mgnn$} \label{div}
\setcounter{equation}{0}

In this section we construct $G -$ invariant effective divisors on $\Mgnn.$ First we recall the following standard result.

\begin{proposition}  \label{divisor}
Let $f:X \to Y$ be a morphism of projective schemes, $D \subset Y$ be an effective divisor and assume that $f(X)$ is not contained in $D$. Then $f^*(D)$ is an effective divisor on $X$.
\end{proposition}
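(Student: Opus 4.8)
The plan is to realize the effective divisor $D$ as the zero scheme of a global section of a line bundle and to pull this section back along $f$. Recall that an effective (Cartier) divisor $D \subset Y$ determines a line bundle $L := \cO_Y(D)$ together with a canonical global section $s \in H^0(Y,L)$ whose scheme-theoretic vanishing locus is exactly $D$. Concretely, choosing an affine open cover $\{U_i\}$ of $Y$ on which $D$ is cut out by a single non-zero-divisor $g_i \in \cO_Y(U_i)$, the section $s$ is given by the collection $(g_i)$, and the ratios $g_i/g_j$ furnish the transition functions trivializing $L$.

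First I would pull back this data. The pullback $f^*L$ carries a canonical section $f^*s \in H^0(X, f^*L)$ whose local description on $f^{-1}(U_i)$ is the regular function $f^\#(g_i) = g_i \circ f$. By construction $f^*L = \cO_X(f^*D)$ and the vanishing locus of $f^*s$ is the support of $f^*(D)$. The key point for \emph{effectivity} is that each $f^\#(g_i)$ is an honest regular function (not a rational function with poles), which holds precisely because $D$ is effective, i.e. each $g_i$ is regular; pulling back a regular function along a morphism of schemes again yields a regular function.

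Next I would use the hypothesis $f(X) \not\subseteq D$. This says there is a point $x \in X$ with $f(x) \notin \mathrm{Supp}(D) = \{s = 0\}$, hence $s(f(x)) \neq 0$ and therefore $(f^*s)(x) \neq 0$. Thus $f^*s$ is a \emph{nonzero} section of $f^*L$. On an integral projective scheme the vanishing locus of a nonzero section of a line bundle is an effective divisor, which gives exactly $f^*(D) \geq 0$.

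The only genuine subtlety — the step I would flag as the main obstacle in full generality — is the passage from ``$f^*s$ is nonzero'' to ``$f^*(D)$ is an effective \emph{Cartier} divisor.'' For this one needs the local pulled-back equations $f^\#(g_i)$ to be non-zero-divisors, not merely nonzero, as this is what makes the pullback a well-defined Cartier divisor in the first place. On an integral scheme — the only case relevant here, since all our moduli spaces are varieties — every nonzero regular function is automatically a non-zero-divisor, so the implication is immediate, and $f(X) \not\subseteq D$ is exactly the statement that $f^\#(g_i)$ does not vanish identically on $f^{-1}(U_i)$. In the general scheme-theoretic setting one would instead require that no associated point of $X$ map into $D$, but this refinement plays no role in our applications.
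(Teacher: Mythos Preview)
Your argument is correct and is the standard one. The paper itself does not prove this proposition: it is simply recorded as a ``standard result'' and used without further comment, so there is nothing to compare against. Your discussion of the integrality/associated-points subtlety is appropriate and, as you note, vacuous in the applications since all schemes in play are varieties.
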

\

The most natural effective divisors on $\Mg$ are the Brill-Noether divisors parametrizing all curves $C$ possessing a $\mathfrak{g}^r_d$ for fixed $g,r,d$ with Brill-Noether number $\r(g,r,d)=-1$. These exist as long as $g+1$ is composite.
We recall from \cite{eh}:

\begin{lemma}
Assume that $g+1$ is not prime and fix some integers $r,s\geq 1$ such that $g+1=(r+1)(s-1).$ Then 
$$\mathfrak{BN}_g:=\{[C]\in \mathcal{M}_g | C \mbox{ carries a } \mathfrak{g} ^r_{rs-1} \}$$
is an (effective) divisor on $\mathcal{M}_g$. Furthermore the class of its compactification as a $\Q$-divisor is given by
$$
[\overline{\mathfrak{ BN}}_g]= c \Big( (g+3)\l -\frac{g+1}{6} \d_0 -\sum_{i= 1}^{\lfloor\frac{g}{2}\rfloor} i(g-i)\d_i \Big),
$$
for some positive rational number $c$.
\end{lemma}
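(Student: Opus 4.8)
The plan is to proceed in three stages: first verify that the prescribed numerics force the Brill--Noether number to equal $-1$, so that $\mathfrak{BN}_g$ is genuinely a divisor; then write the class of its closure against the standard generators $\l, \d_0, \ldots, \d_{\lfloor g/2 \rfloor}$ of $\Pic(\Mg)_{\Q}$ and pin down the coefficients.

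First I would record the Brill--Noether number. With $d = rs-1$ one computes
$$g - d + r = (g+1) - r(s-1) = (r+1)(s-1) - r(s-1) = s-1,$$
using $g+1 = (r+1)(s-1)$, and therefore
$$\r(g,r,d) = g - (r+1)(g-d+r) = g - (r+1)(s-1) = g - (g+1) = -1.$$
Since $\r(g,r,d) = -1$, standard Brill--Noether existence results together with Gieseker's theorem that the general curve of genus $g$ carries no such series show that $\mathfrak{BN}_g$ is a nonempty effective locus of pure codimension one, i.e. an honest divisor. Its closure may then be written as
$$[\overline{\mathfrak{BN}}_g] = a\l - b_0 \d_0 - \sum_{i=1}^{\lfloor g/2 \rfloor} b_i \d_i,$$
the signs being dictated by the fact that no boundary component is contained in $\overline{\mathfrak{BN}}_g$, and the coefficients $a, b_0, b_i$ remaining to be determined.

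Next I would fix $a$ and $b_0$. Over a one--parameter family $\mathcal{C} \to B$ with smooth total space and general moduli, one forms on the universal curve the evaluation map of vector bundles whose degeneracy locus, by the Thom--Porteous formula, cuts out the Brill--Noether locus; applying Grothendieck--Riemann--Roch to express the Chern classes of these bundles through $\pi_* \omega$ (which contributes $\l$) and the boundary yields the ratio $a : b_0 = (g+3) : \tfrac{g+1}{6}$. The overall normalization is then governed by the enumerative Castelnuovo count $N = g! \prod_{i=0}^{r} \frac{i!}{(g-d+r+i)!}$ of linear series in the adjacent $\r = 0$ situation, which controls the degree of $\overline{\mathfrak{BN}}_g$ on a generic pencil.

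Finally, the boundary coefficients $b_i$ require the theory of limit linear series, and this is where I expect the main obstacle. Degenerating a general curve into $\Delta_i$ to a nodal curve $X \cup_p Y$ with $X, Y$ of genera $i$ and $g-i$, one analyses when a limit $\mathfrak{g}^r_{rs-1}$ survives, in terms of the compatibility of the vanishing sequences of the two aspects at the node $p$; the resulting local dimension count determines the multiplicity of $\overline{\mathfrak{BN}}_g$ along $\Delta_i$, and it comes out proportional to $i(g-i)$, \emph{independent} of the chosen factorization $(r,s)$. This slope--independence, established in \cite{eh}, is not visible from the Porteous computation alone and genuinely needs the degeneration analysis; assembling the three stages produces the asserted class with a common positive factor $c$.
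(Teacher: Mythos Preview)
The paper does not actually prove this lemma; it is introduced with ``We recall from \cite{eh}'' and the only argument supplied is the verification (in the paragraph following the statement) that $\r(g,r,rs-1)=-1$, together with the remark that the constant $c$ alone depends on the factorisation $(r,s)$. Your first stage reproduces exactly that verification, so on the part the paper does justify you agree.

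Beyond that, what you have written is a correct high-level roadmap of the Eisenbud--Harris computation that the paper is citing rather than an independent proof. The ingredients you name---a Porteous/GRR computation on a one-parameter family to get the $\l$ and $\d_0$ coefficients, and a limit-linear-series degeneration into $\Delta_i$ to extract the $i(g-i)$ coefficients and their independence of $(r,s)$---are indeed the backbone of \cite{eh}. So your approach is not different from the paper's; it is simply an expansion of the reference the paper invokes. The one place where your sketch is a bit loose is the determination of $a$ and $b_0$: historically the slope $6+\tfrac{12}{g+1}$ was obtained by intersecting with explicit test curves (pencils of curves on surfaces) rather than by a direct Porteous/GRR calculation on the universal curve, and the irreducibility/codimension-one statement for $\mathfrak{BN}_g$ itself already uses degeneration arguments. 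None of this is wrong, but if you intend your write-up to stand on its own you should either carry out those computations or, as the paper does, cite \cite{eh} and \cite{hm} explicitly for them.
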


Note that only the constant $c$ depends on the choice of $r$ and $s$. Note also that  $g+1=(r+1)(s-1)$ implies that the Brill-Noether number satisfies $\r(g,r,rs-1)=-1$. Therefore $\mathfrak{BN}_g$ parametrises precisely those (non-general) curves violating the condition of the Brill-Noether Theorem (see \cite{gh}), which states that a general curve carries a $\mathfrak{g}^r_d$, if and only if $\r(g,r,d)\geq 0$.

Now consider the forgetful map $\phi:\Mgnn \to \Mg$ and recall from the introduction  the map $\chi : \Mgnn \to \Mgun$.

If neither $g+1$ nor $g+n+1$ are prime, we pull back the class of $[\overline{\mathfrak{ BN}}_g]$ as a $\Q$-Divisor (up to multiplication with a positive scalar) and set
\begin{equation} \label{pullback1}
B=\frac{1}{c}\phi^*[\overline{\mathfrak{ BN}}_g],  \qquad D=\frac{1}{c} \chi^*[\overline{\mathfrak{ BN}}_{g+n}].
\end{equation}

We shall now apply results of \cite{ac2}, where the pull-backs of the divisor classes $\psi_i,\dirr,\d_{i,S}$ are computed under the forgetful map forgetting one point and under the gluing map gluing two marked points. Thus repeatedly applying  Lemma 1.2 and Lemma 1.3 of \cite{ac2} yields

\begin{lemma}
With the above notation, one has
\begin{equation}\label{B}
B=b_{\l}\l +0\cdot\psi-  b_0\dirr -\sum_{i= 1}^{\lfloor\frac{g}{2}\rfloor}\sum_S b_i\d_{i,S}
\end{equation}
with $$ b_\l=g+3,\qquad b_0=\frac{g+1}{6},\qquad b_i=i(g-i).$$ 
Furthermore
\begin{equation}\label{D}
D=d_\l\l+d_0(\psi-\dirr-\sum_{i=0}^{\lfloor\frac{g}{2}\rfloor}\sum_{a+b\leq n,b\neq0}\d_{i;a,b})-\sum_{i=0}^{\lfloor\frac{g}{2}\rfloor}\sum_{a=0}^nd_{i+a}\d_{i;a,0}
\end{equation}
with $$ d_\l=g+n+3,\qquad d_0=\frac{g+n+1}{6},\qquad d_i=i(g+n-i).$$

\end{lemma}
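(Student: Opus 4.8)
The plan is to treat the two pullbacks separately, in each case reducing the computation to the one-step pullback formulas of \cite{ac2} (their Lemmas 1.2 and 1.3), applied once for each marked point in the case of $\phi$, and once for each glued pair in the case of $\chi$.

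For $B$ I would regard the forgetful map $\phi$ as the composition of $2n$ point-forgetting maps. Since the single-point forgetful pullback acts by $\l\mapsto\l$, $\dirr\mapsto\dirr$, and $\d_{i,T}\mapsto\d_{i,T}+\d_{i,T\cup\{\ast\}}$ with no $\psi$-term, iterating gives $\phi^*\l=\l$, $\phi^*\d_0=\dirr$, and $\phi^*\d_i=\sum_S\d_{i,S}$, the sum ranging over all $S\subseteq\{1,\ldots,2n\}$. Substituting into $[\overline{\mathfrak{BN}}_g]$ and dividing by $c$ yields \eqref{B} with $b_\l=g+3$, $b_0=\tfrac{g+1}{6}$, $b_i=i(g-i)$, and coefficient $0$ on $\psi$, the last because the source class on $\Mg$ carries no $\psi$. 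This half is routine bookkeeping.

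For $D$ I would write $\chi$ as a composition of $n$ gluing maps, each identifying one pair $(x_i,y_i)$ into a non-separating node and raising the genus by one. The single-gluing pullback of \cite{ac2} gives $\l\mapsto\l$, while $\dirr$ pulls back to $\dirr$ minus the cotangent classes at the two branches of the newly created node. The three geometric observations driving the computation are: (i) the image of $\chi$ lies entirely in the irreducible boundary of $\Mgun$, since every glued node is non-separating, so $\chi^*\d_0$ acquires the normal-bundle contribution $-\sum_{i=1}^n(\psi_i+\psi_{n+i})=-\psi$; (ii) the separating node of a curve in $\d_{i,S}$ becomes non-separating after gluing exactly when some pair straddles $S$ and its complement, i.e. when $b\neq 0$, in which case it too contributes to $\chi^*\d_0$; and (iii) the separating node survives as separating precisely when $b=0$, and then gluing the $a$ pairs inside the genus-$i$ component raises its genus to $i+a$, so $\d_{i;a,0}$ maps into the boundary divisor $\d_{i+a}$ of $\Mgun$ with no $\psi$-contribution.

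Combining (i)--(iii) gives $\chi^*\d_0=-\psi+\dirr+\sum_{i}\sum_{a+b\leq n,\,b\neq 0}\d_{i;a,b}$ and $\chi^*\d_j=\sum_{i+a=j}\d_{i;a,0}$. Substituting into $[\overline{\mathfrak{BN}}_{g+n}]$, the term $-\tfrac{g+n+1}{6}\chi^*\d_0$ produces exactly $d_0\bigl(\psi-\dirr-\sum_{b\neq0}\d_{i;a,b}\bigr)$, and the re-indexing $\sum_j j(g+n-j)\sum_{i+a=j}\d_{i;a,0}=\sum_{i,a}(i+a)(g+n-i-a)\,\d_{i;a,0}$ delivers the final sum of \eqref{D} with $d_k=k(g+n-k)$. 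I expect the main obstacle to be steps (ii)--(iii): tracking how each boundary stratum $\d_{i,S}$ is redistributed under the composite gluing, in particular justifying the clean dichotomy ``a straddling pair ($b\neq0$) forces the node to become non-separating'' versus ``no straddling pair ($b=0$) preserves separation with genus shift $i\mapsto i+a$,'' and correctly accumulating the $-\psi$ normal-bundle term over all $n$ gluings.
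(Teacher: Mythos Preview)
Your proposal is correct and follows essentially the same approach as the paper: both compute $B$ and $D$ by repeatedly applying the one-step pullback formulas of \cite{ac2}, Lemmas~1.2 and~1.3, iterating over the $2n$ forgotten points for $\phi$ and over the $n$ glued pairs for $\chi$. Your geometric dichotomy (ii)--(iii) for how $\d_{i,S}$ redistributes under $\chi$ (straddling pair $\Leftrightarrow$ node becomes non-separating; no straddling pair $\Leftrightarrow$ node stays separating with genus shift $i\mapsto i+a$) is exactly what the iterated application of \cite{ac2} Lemma~1.3 encodes, and the accumulated $-\psi$ term arises precisely as you describe.
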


In particular, both divisors are $G -$invariant.
The problem is that these divisors will not exist for $g+1$ or $g+n+1$ prime. In that case we have to use less efficient divisors parametrizing curves that violate the Gieseker-Petri condition.

We recall that a curve C satisfies the Gieseker-Petri condition, if for every line bundle $L$ the natural map 
$$\mu: H^0(C,L)\otimes H^0(C,K\otimes L^{-1}) \to H^0(C,K)$$ is injective.

We also recall from \cite{eh} Theorem 2 the following divisor.

\begin{lemma}
Assume that $g=2d-2$ is even, then
$$\mathfrak{GP}_g:=\{ [C]\in\mathcal{M}_g| C \mbox{ violates the Gieseker-Petri condition}\}$$ is an (effective) divisor on $\mathcal{M}_g$.
Furthermore the class of its compactification is given by 
$$
[\overline{\mathfrak{GP}}_g]= c( a_{\l}\l- \sum_{i=0}^{\frac{g}{2}} a_i \d_i),
$$ for some rational number $c>0$ and
$$ a_{\l}=6d^2+d-6, \quad a_0= d(d-1), \quad a_1=(2d-3)(3d-2), \quad a_{i+1}>a_i. $$
\end{lemma}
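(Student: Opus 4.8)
The plan is to exhibit $\mathfrak{GP}_g$ as the degeneracy locus of a morphism between two vector bundles of the \emph{same} rank and then to read off its class by Grothendieck--Riemann--Roch together with a boundary analysis through limit linear series. First I would pin down the relevant linear series. Writing $g=2d-2$, the only vanishing Brill--Noether number for a pencil is $\r(g,1,d)=g-2(g-d+1)=0$, so a general curve carries finitely many $\mathfrak{g}^1_d$'s, and for each underlying $L$ one has $h^0(L)=2$ and, by Riemann--Roch, $h^0(K-L)=h^1(L)=d-1$. Hence the Petri map
$$\mu_L\colon H^0(C,L)\otimes H^0(C,K\otimes L^{-1})\to H^0(C,K)$$
runs between spaces of dimensions $2(d-1)=g$ and $g$. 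Its failure to be injective is therefore equivalent to its failure to be an isomorphism, i.e.\ to the vanishing of a single determinant, and this is precisely what forces $\mathfrak{GP}_g$ to be a divisor rather than a locus of higher codimension.

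To make this global I would work over the parameter space $\mathcal{G}^1_d$ of pairs $(C,L)$, which by the $\r=0$ count maps finitely onto $\mathcal{M}_g$ and, over the locus of curves carrying a single offending pencil, is birational to $\mathfrak{GP}_g$, so that no spurious multiplicity enters the pushforward. On a suitable partial compactification with universal curve $\pi$ and Poincaré bundle $\mathcal{L}$, I would assemble the relative Petri map into a bundle map $\mu\colon\mathcal{E}\to\mathcal{F}$, where $\mathcal{F}=\pi_*\o_\pi$ is the Hodge bundle (so $c_1(\mathcal{F})=\l$) and $\mathcal{E}=\pi_*\mathcal{L}\otimes\pi_*(\o_\pi\otimes\mathcal{L}^{-1})$. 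Since $\operatorname{rank}\mathcal{E}=\operatorname{rank}\mathcal{F}=g$, the class of the degeneracy locus is the determinant class $c_1(\mathcal{F})-c_1(\mathcal{E})$, and $c_1(\mathcal{E})$ is extracted from the tensor-product rule $c_1(\mathcal{E})=(d-1)\,c_1(\pi_*\mathcal{L})+2\,c_1(\pi_*(\o_\pi\otimes\mathcal{L}^{-1}))$, each summand computed by Grothendieck--Riemann--Roch on the universal curve. Pushing forward to $\mathcal{M}_g$ and collecting the $\l$-terms should produce the interior coefficient $a_\l=6d^2+d-6$.

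The hard part will be the boundary coefficients $a_i$. The naive determinantal formula must be corrected there, since over $\Delta_i$ the bundle $L$ degenerates and the Petri map has to be reinterpreted through the Eisenbud--Harris theory of limit linear series: one replaces $L$ by its aspects on the components of a degenerate curve and analyses when the limiting Petri maps drop rank. Concretely, for each $i$ I would select a one-parameter test family meeting $\Delta_i$ transversally and otherwise contained in the interior, compute its intersection with $\overline{\mathfrak{GP}}_g$ by the limit-linear-series count, and thereby read off $a_i$; a family degenerating to an irreducible nodal curve yields $a_0=d(d-1)$, and a family acquiring an elliptic tail on a fixed general genus $g-1$ curve pins down $a_1=(2d-3)(3d-2)$. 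Controlling all limiting contributions simultaneously---in particular excluding excess or spurious vanishing of the limit Petri map along the boundary---is the principal technical obstacle, and it is exactly the computation carried out in Theorem~2 of \cite{eh}, from which the monotonicity $a_{i+1}>a_i$ follows once the low coefficients and the general pattern are in hand.
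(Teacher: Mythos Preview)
The paper does not prove this lemma; it is stated as a recollection, introduced with ``We also recall from \cite{eh} Theorem 2 the following divisor,'' and no argument is given. Your proposal is a faithful sketch of the Eisenbud--Harris strategy behind that cited result---setting up the Petri map as a determinantal condition between bundles of equal rank $g$ over the Hurwitz-type space $\mathcal{G}^1_d$, computing the interior class via Grothendieck--Riemann--Roch, and extracting the boundary coefficients by limit linear series on test families---and you yourself close by pointing to Theorem~2 of \cite{eh}. So your approach and the paper's coincide: both defer to \cite{eh}, with the difference that you have unpacked the argument rather than merely citing it.

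One small caution in your sketch: the forgetful map $\mathcal{G}^1_d\to\mathcal{M}_g$ is generically finite of degree equal to the number of $\mathfrak{g}^1_d$'s on a general curve, not birational, so when you push the degeneracy class forward you must account for this degree (it is absorbed into the unspecified positive constant $c$). This does not affect the shape of the formula, only the normalisation, and is consistent with the statement as given.
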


Similar to \eqref{pullback1},  we pull back $[\overline{\mathfrak{GP}}]$ along the forgetful map $\phi$ and the gluing map $\chi$  and set
\begin{equation}
E=\frac{1}{c}\phi^*[\overline{\mathfrak{GP}}_g],\qquad F=\frac{1}{c}\chi^*[\overline{\mathfrak{GP}}_{g+n}].
\end{equation}  
By repeatedly applying \cite{ac2}, Lemma 1.2 and Lemma 1.3, we obtain
\begin{equation}\label{E}
E=e_{\l}\l+0\cdot\psi-e_0\dirr -\sum_{i=1}^{\frac{g}{2}}\sum_S e_i\d_{i,S},
\end{equation}
with 
$$e_{\l}=6(\frac{g}{2}+1)^2+\frac{g}{2}-5, \quad
e_0=(\frac{g}{2}+1)\frac{g}{2}, \quad
e_1=(g-1)(\frac{3g}{2}+1), \quad e_{i+1}>b_i,$$ 
and
\begin{equation}\label{F}
F=f_{\l}\l+f_0(\psi-\dirr-\sum_{i=0}^{\lfloor\frac{g}{2}\rfloor}\sum_{a+b\leq n,b\neq0}\d_{i;a,b})-\sum_{i=0}^{\lfloor\frac{g}{2}\rfloor}\sum_{a=0}^n f_{i+a}\d_{i;a,0},
\end{equation}
with 
$$f_{\l}=6(\frac{g+n}{2}+1)^2+\frac{g+n}{2}-5, \quad
f_0=(\frac{g+n}{2}+1)\frac{g+n}{2}, \quad
f_1=(g+n-1)(\frac{3g+3n}{2}+1), $$
$$f_{i+1}>\tilde{b}_i.$$

Now observe that the divisor classes $B$ and $E$ are effective by Proposition \ref{divisor}, because the forgetful map $\phi$ is onto. The divisor class $F$ is effective because the general nodal curve is Gieseker-Petri general. This is already implicitly contained in \cite{g} (see also \cite{eh1}), in the proof of the Gieseker-Petri Theorem. Since the proof uses deformation to a nodal curve, it actually shows the above statement. 
Effectiveness of $D$ follows in the same way observing that the Gieseker-Petri condition implies the Brill-Noether condition. Alternatively, one could consider the proof of the Brill-Noether Theorem in \cite{gh}, which also uses deformation to a nodal curve.

Finally we need divisors of Weierstrass-type. We recall from \cite{l1}, Section 5,
the divisors $W(g;a_1, \ldots,a_m)$ on $\Mgm$, where $a_i \geq 1$ and $\sum a_i=g$.
They are given by the locus of curves $C$ with marked points $p_1, \ldots, p_m$ such that there exists a $\mathfrak{g}^1_g$ on $C$ containing $\sum_{1 \leq i \leq m}a_i p_i$. We want to minimize the distance between the weights $a_i$. Thus we decompose
$g=km+r$, with  $r<m$,  and set
\begin{equation}\label{wmdef}
W_m=W(g;a_1, \ldots,a_m), \qquad a_j=k+1 \, \, (1\leq j \leq r), \quad  
a_j=k  \, \, (r+1 \leq j \leq m).
\end{equation}

This gives, in view of \cite{l1}, Theorem 5.4,
\begin{equation}\label{wm}
\begin{split}
W_m 
& = -\l+	\sum_{i=1}^r \frac{(k+1)(k+2)}{2}\omega_i +\sum_{i=r+1}^m \frac{k(k+1)}{2} \omega_i  -0\cdot\dirr 
\\&-\sum_{i,j\leq r} (k+1)^2 \d_{0,\{i,j\} }  -\sum_{i\leq r,j>r} k(k+1) \d_{0,\{i,j\} } -\sum_{i,j>r} k^2 \d_{0,\{i,j\} } \\
&-\mbox{higher order boundary terms},  
\end{split}
\end{equation}
where {\em higher order}  means a positive linear combination of $\d_{i,S}$ where either $i>0$ or $|S|>2$.

From $W_m$ we want to generate  a $G - $ invariant divisor class $W$ on $\Mgnn$, by summing over appropriate pullbacks. Thus we let $S,T$ be disjoint subsets of $\{1,\ldots,2n \}$ with $|S|=r$ and $|T|=m-r$ (recall that $r$ is fixed by the decomposition   $g=mk+r$) and let 
\begin{equation}
\phi_{S,T}: \Mgnn \to \Mgm
\end{equation}
be a projection (i.e. a surjective morphism of projective varieties) mapping the class 
$[C; q_1, \ldots ,q_{2n}]$ to $[C; p_1, \ldots ,p_{m}],$
where the points $q_i$ labeled by $S$ are sent to the points $p_1, \ldots,p_r$ (all with weights $a_i=k+1$) and the points labeled by $T$ are sent to the points $p_{r+1}, \ldots,p_m$ (all with weights equal to $k$). Clearly, for fixed $g$, there are precisely  $$a(g,n,m,r):=\binom{2n}{r} \binom{2n-r}{m-r}$$ such projections. With this notation we introduce
\begin{equation}\label{W}
\begin{split}
W := &\sum_{S,T} \phi_{S,T}^* W_m \\
= &-w_\l \l +w_\psi \psi+0\cdot\dirr-\sum_{s\geq 2} w_s \sum_{|S|=s} \d_{0,S}
\\&-\mbox{higher order boundary terms},  
\end{split}
\end{equation}
where  {\em higher order} denotes a positive linear combination of boundary divisors $\d_{i,S}$ with     $i \geq 1$,
\begin{equation}  \label{ws}
w_s \geq s w_\psi \geq 3w_\psi \qquad \mbox{for } s\geq 3,
\end{equation}
\begin{equation}  \label{w lambda}
w_\l= a(g,n,m,r) =-\binom{2n}{r} \binom{2n-r}{m-r},
\end{equation}
\begin{equation}\label{w psi}
 w_\psi=\binom{2n-1}{r-1} \binom{2n-r}{m-r} \dfrac{(k+1)(k+2)}{2} +\binom{2n-1}{r} \binom{2n-r-1}{m-r-1} \dfrac{k(k+1)}{2},
\end{equation}
\begin{equation}\label{w2}
\begin{split}
w_2= &2w_\psi + \binom{2n-2}{r-2} \binom{2n-r}{m-r}(k+1)^2 +2\binom{2n-2}{r-1} \binom{2n-r-1}{m-r-1}k(k+1) 
 \\&+\binom{2n-2}{r} \binom{2n-r-2}{m-r-2}k^2.  
\end{split}
\end{equation}

 Equation \eqref{w psi} is proved by applying pullback to \eqref{wm}, using $\o:=\sum_{i=1}^{2n} \o_i$,
 \begin{equation}
\sum_{S,T} \sum_{i=1}^r  \pi_{S,T}^* \o_i = a(g,n,m,r) \frac{r}{2n} \o =\binom{2n-1}{r-1} \binom{2n-r}{m-r}\o
 \end{equation}
and
\begin{equation}
\sum_{S,T} \sum_{i=r+1}^m  \pi_{S,T}^* \o_i = a(n,m,g) \frac{m-r}{2n} \o=\binom{2n-1}{r} \binom{2n-r-1}{m-r-1},
\end{equation}
noting that equation \eqref{basechangeprep} implies
\begin{equation} \label{basechange}
\o=\psi - \sum_S |S| \d_{0,S}.
\end{equation}

The sums over the pullbacks of the boundary divisors are computed by similar combinatorial considerations which we leave to the reader. Note that both the summand
$2w_\psi$ on the right hand side of \eqref{w2}  and the bound in \eqref{ws} are  generated by the change of basis given in \eqref{basechangeprep}.\\

It turns out that we will be able to improve the upper bound on $n$ given in Proposition
\ref{prop one} by replacing $W$ with divisors parametrizing curves, that fail the so called {\em Minimal Resolution conjecture}, see \cite{f}, Theorem 4.2. 

\begin{lemma}
Fix integers $g,r\geq 1$ and $0\leq k\leq g$ and set $m=(2r+1)(g-1)-2k$. Then
$$\mathfrak{Mrc}_{g,k}^r:= \{ [C,x_1,\ldots, x_m]\in \mathcal{M}_{g,m}| h^1(C, \wedge^i M_{K_C}\otimes K_C^{\otimes(r+1)} \otimes\mathcal{O}_C(-x_1-\cdots-x_m))\geq 1\}$$
is a divisor on $\mathcal{M}_{g,m}$. Furthermore the class of its compactification is given by
\begin{equation}
[\overline{\mathfrak{Mrc}}_{g,k}^r]= \frac{1}{g-1} \binom{g-1}{k}(-a_\l \l +a_\psi \psi+a_{\mathrm{irr}}\dirr -\sum_{i,s} a_{i,s} \sum_{|S|=s} \d_{i,S} )
\end{equation}
where 
\begin{equation}\label{coefficients MinResConj}
\begin{split}
&a_\l	=\frac{1}{g-2}\Big( (g-1)(g-2)(6r^2+6r+r)+k(24r+10k+10-10g-12rg)\Big),	\\ 
&a_\psi 	= rg+g-k-r-1,	\\
&a_{\mathrm{irr}}=\frac{1}{g-2}\Big( \binom{r+1}{2} (g-1)(g-2)+k(k+1+2r-rg-g)\big), \\
&a_{0,s}	= \binom{s+1}{2}(g-1)+s(rg-r-k) \qquad\mbox{and} \qquad a_{i,s}\geq a_{0,s}.
\end{split}
\end{equation}
\end{lemma}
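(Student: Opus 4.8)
The statement is recalled verbatim from \cite{f}, Theorem 4.2, so the plan is to reconstruct its proof by exhibiting $\mathfrak{Mrc}_{g,k}^r$ as the degeneracy locus of a morphism of vector bundles of equal rank and then computing the class of that locus by Grothendieck--Riemann--Roch (GRR). First I would set up the relative picture: let $\pi:\overline{\mathcal{C}}\to\Mgm$ be the universal curve with disjoint sections $\sigma_1,\ldots,\sigma_m$, let $\mathbb{E}=\pi_*\omega_\pi$ be the Hodge bundle, and define the relative kernel bundle $M$ on $\overline{\mathcal{C}}$ by the evaluation sequence
$$0 \to M \to \pi^*\mathbb{E} \to \omega_\pi \to 0,$$
so that $M|_C=M_{K_C}$ on a smooth fibre. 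I then set $\mathcal{V}:=\wedge^{k} M\otimes\omega_\pi^{\otimes(r+1)}\otimes\mathcal{O}(-\sigma_1-\cdots-\sigma_m)$; the exterior-power index making the numerics work is $i=k$ (this is the reading I adopt). Since $\wedge^k M_{K_C}$ has rank $\binom{g-1}{k}$ and degree $-(2g-2)\binom{g-2}{k-1}$, a Riemann--Roch computation on a smooth fibre shows that the fibrewise Euler characteristic of $\mathcal{V}$ vanishes \emph{precisely} when $m=(2r+1)(g-1)-2k$, which is the hypothesis. Consequently $R^0\pi_*\mathcal{V}$ and $R^1\pi_*\mathcal{V}$ have equal rank away from a proper closed locus, $\mathfrak{Mrc}_{g,k}^r$ is the jumping locus, and $[\mathfrak{Mrc}_{g,k}^r]=-c_1(\pi_!\mathcal{V})$ up to sign.

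Second, to know that this locus is a genuine divisor and not all of $\Mgm$, I would exhibit a single $m$-pointed smooth curve with $h^1(\mathcal{V}|_C)=0$ (equivalently $h^0=0$, since $\chi=0$). This is exactly the Minimal Resolution genericity statement underlying \cite{f}, Theorem 4.2: a specialization/degeneration argument to a suitable curve carrying general points, combined with semicontinuity, gives a point off the locus and simultaneously guarantees that the degeneracy locus has the expected codimension one.

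Third, and this is the main computation, I would evaluate $c_1(\pi_!\mathcal{V})$ by GRR, $\mathrm{ch}(\pi_!\mathcal{V})=\pi_*\!\big(\mathrm{ch}(\mathcal{V})\cdot\mathrm{td}(\omega_\pi^{\vee})\big)$, keeping terms up to codimension $2$ on $\overline{\mathcal{C}}$ (which push to codimension $1$ on $\Mgm$). From the defining sequence $\mathrm{ch}(M)=\pi^*\mathrm{ch}(\mathbb{E})-\mathrm{ch}(\omega_\pi)$, and $\mathrm{ch}(\wedge^k M)$ follows from the splitting principle via the generating identity for exterior powers. Multiplying by $\mathrm{ch}(\omega_\pi^{r+1})$ and $\mathrm{ch}(\mathcal{O}(-\sum\sigma_i))$ and pushing forward using the standard Mumford relations ($\pi_*c_1(\omega_\pi)^2=12\l-\delta$ with its marked-point corrections, $\pi_*\sigma_i^2=-\psi_i$, $\sigma_i\sigma_j=0$ for $i\neq j$, $\pi_*(\sigma_i\,c_1(\omega_\pi))=\psi_i$) produces the $\l$-, $\psi$-, $\dirr$- and $\delta_{0,S}$-coefficients; the overall factor $\tfrac{1}{g-1}\binom{g-1}{k}$ emerges naturally from the rank of $\wedge^k M$.

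The hard part, where I expect the real obstacle, is the boundary. The kernel bundle $M$ does not extend naively as a locally free sheaf across nodal fibres, since its fibrewise definition degenerates at the nodes, so one must either extend the construction over a partial compactification on which it remains locally free and then compare, or else compute the boundary coefficients $a_{i,s}$ by intersecting $\mathfrak{Mrc}_{g,k}^r$ with explicit one-parameter test curves meeting each stratum $\Delta_{i,S}$. The inequalities $a_{i,s}\ge a_{0,s}$ reflect that the higher strata contribute at least as much as $\delta_{0,S}$; establishing these, together with the exact value of $a_{\mathrm{irr}}$, is the delicate bookkeeping, and is precisely the step for which I would defer to the detailed analysis of \cite{f}.
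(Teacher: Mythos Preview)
The paper itself does not prove this lemma at all: it is stated as a quotation of \cite{f}, Theorem~4.2, and used as input. So there is no ``paper's own proof'' to compare against; the paper's argument is simply the citation.

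Your reconstruction of how the class is actually computed in \cite{f} is essentially the right outline: one realizes the locus as the degeneracy (jumping) locus of a morphism of tautological sheaves of equal rank, checks it is a genuine divisor by producing a point off it, and computes $c_1(\pi_!\mathcal{V})$ via GRR using the Mumford push-forward identities. Your identification of the difficult step---extending across the boundary and extracting the $a_{i,s}$ via test curves, where the kernel bundle does not globalize naively---is accurate, and is exactly why you, like the paper, ultimately defer to \cite{f} for the full computation. One small point: you write ``$i=k$'' for the exterior-power index to make the numerics close; in \cite{f} the index is indeed tied to $k$ in this way, so that reading is correct, though the lemma as stated in the present paper leaves the symbol $i$ in $\wedge^i M_{K_C}$ unresolved.
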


Note that $[\overline{\mathfrak{Mrc}}_{g,k}^r]$ is already $G$-invariant. \\

For $g$ odd and $2n\geq g-1$ we can always find an $r,k$ such that $2n=m=(2r+1)(g-1)-2k$. In fact we will only  consider $2n\geq g+1$ and choose $k\leq g-2$. This will make some computations easier and uniquely determine $r,k$ for given $g,n$.
So for $g$ odd and such $r,k$ we define the divisor $U$ as the class of $\overline{\mathfrak{Mrc}}_{g,k}^r $ as a $\Q$-divisor (up to multiplication with a positive scalar)by

\begin{equation} \label{U}
U:=[\overline{\mathfrak{Mrc}}_{g,k}^r ]= -u_\l+u_\psi \psi +u_{\mathrm{irr}} \dirr -\sum_{i,s} u_{i,s} \sum_{|S|=s} \d_{i,S}
\end{equation}
with $u_\l, u_\psi, u_{\mathrm{irr}}, u_{i,s}$ being $a_\l, a_\psi, a_{\mathrm{irr}}, a_{i,s}$ from \eqref{coefficients MinResConj}.

For $g$ even we set $2n-1=m=(2r+1)(g-1)-2k$ and pull back via all possible forgetful maps $\phi_i:\Mgnn\to\Mgm$, forgetting the $i$th point. Again we only consider $k\leq g-2$ corresponding to $m\geq g+1$. We emphasize that, just as above  for $U$, the integers $r$ and $k$ are uniquely determined by $g$ and $n$. This gives
\begin{equation} \label{V}
V:=\sum_{i=1}^{2n}\phi^*_i [\overline{\mathfrak{Mrc}}_{g,k}^r]= -v_\l+v_\psi \psi +v_{\mathrm{irr}} \dirr -\sum_{i,s} v_{i,s} \sum_{|S|=s} \d_{i,S}
\end{equation}
with 

\begin{equation}
\begin{split}
v_\l	=&\frac{2n}{g-2}\Big( (g-1)(g-2)(6r^2+6r+r)+k(24r+10k+10-10g-12rg)\Big), \\
v_\psi	=& (2n-1)(rg+g-k-r-1), \\
v_{\mathrm{irr}}=& \frac{2n}{g-2}\Big( \binom{r+1}{2} (g-1)(g-2)+k(k+1+2r-rg-g)\big), \\
v_{0,2} =& 2 (rg + g - k - r - 1) + (2 n - 2) (3 g - 3 + 2 rg - 2 r - 2 k) \\
v_{i,s}\geq & v_{0,2}.
\end{split}
\end{equation}

\section{Standard cases in the proof of Theorem \ref{two}: Reduction to a system of inequalities} \label{system}
\setcounter{equation}{0}

As mentioned in Section 3, the moduli space $\Ngn$ is of general type, if and only if its canonical divisor $K_{\Ngn}$ is the sum of an ample divisor and effective divisors. We show this by decomposing $K=\pi^*(K_{\Ngn})$ on $\Mgnn$ as a sum of a positive multiple of $\psi$, of non-negative multiples of the $G$-invariant divisors constructed in the last section and of non-negative multiples of $\l,\dirr$ and the $\d_{i;a,b}$.

We will begin by using the divisor class $W$ to show:
\begin{proposition}   \label{prop one}
$\Ngn$ is of general type in the special cases 
$7 \leq g \leq 23,$  and\\
$n_{\mathrm{min}}(g) \leq n \leq  n_{\mathrm{max}}(g)$ 
with  $n_{\mathrm{min}}(g), n_{\mathrm{max}}(g)$  given in the following table:
$$\begin{array}{c|c|c|c|c|  c|c|c|c|c|  c|c|c|c|c|  c|c|c}
g &7&8&9&10&11&12&13&14&15&16&17&18&19&20&21&22&23\\
\hline
n_{\mathrm{min}}&9&8&8&8&6&7&6&6&6&6&5&6&4&4&3&4&1\\
n_{\mathrm{max}}&10&12&14&16&18&20&22&24&26&28&30&32&34&36&38&40&42
\end{array}$$
\end{proposition}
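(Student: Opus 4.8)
The plan is to use the sufficient condition recorded just after \eqref{K}. Since by Theorem \ref{noadcon} the Kodaira dimension of $\Ngn$ equals the Iitaka dimension of $K_{\Ngn}$, it is enough to write the pulled-back canonical class $K=\pi^*(K_{\Ngn})$ on $\Mgnn$ in the form
\[
K \;=\; s\,\psi \;+\; \alpha B+\beta D+\gamma W \;+\; c_\l\,\l+c_{\mathrm{irr}}\,\dirr+\sum_{i;a,b}c_{i;a,b}\,\d_{i;a,b},
\]
with $s>0$ and all other coefficients $\ge 0$. Here $\psi$ is big and nef by Proposition \ref{psi}, while $\l$ (semiample, hence $\Q$-effective), $\dirr$, the $\d_{i;a,b}$ and the constructed classes $B,D,W$ are all effective; such a decomposition therefore realizes $K$ as a positive multiple of a big-and-nef class plus an effective class, so $K$ is big and $\Ngn$ is of general type. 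When $g+1$ is prime I would drop $B$ in favour of the Gieseker--Petri pullback $E$ of \eqref{E}, and when $g+n+1$ is prime replace $D$ by $F$ of \eqref{F}.

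First I would rewrite \eqref{K} in the $G$-invariant basis $\{\l,\psi,\dirr,\d_{i;a,b}\}$. Expanding $\d=\dirr+\sum_{i;a,b}\d_{i;a,b}$ and folding in the extra $-\d_{1,\emptyset}=-\d_{1;0,0}$ and $-\d_{0;1,0}$ gives
\[
K = 13\,\l+\psi-2\,\dirr-3\,\d_{0;1,0}-3\,\d_{1;0,0}-2\!\!\!\sum_{(i;a,b)\neq(0;1,0),(1;0,0)}\!\!\!\d_{i;a,b}.
\]
Substituting \eqref{B}, \eqref{D} and \eqref{W} and equating the coefficient of each basis class turns the existence of the decomposition into a finite system of linear inequalities in $(\alpha,\beta,\gamma)$, the numbers $s$ and $c_\bullet$ being slack variables required to be nonnegative. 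Explicitly, the $\psi$-coefficient gives $s=1-\beta d_0-\gamma w_\psi>0$, the $\l$-coefficient gives $\alpha b_\l+\beta d_\l\le 13+\gamma\,a(g,n,m,r)$, and $\dirr$ gives $\alpha b_0+\beta d_0\ge 2$.

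The simplification I expect is that the many boundary inequalities collapse to a few. For $i\ge1$ the class $B$ contributes $-\alpha b_i$ with $b_i=i(g-i)$ strictly increasing, so by monotonicity these reduce to $\d_{1;0,0}$, which is immediate since $b_1=g-1\ge 6$; for $i=0$ and $|S|\ge 3$ the bound \eqref{ws} ($w_s\ge s\,w_\psi\ge 3w_\psi$) together with the large all-pair coefficients $-d_{i+a}$ of $D$ makes them non-binding once $s$ is fixed. What genuinely remains are the two classes with $i=0$, $|S|=2$: the double point $\d_{0;0,2}$, hit only by $D$ $(-d_0)$ and $W$ $(-w_2)$, forces $\beta d_0+\gamma w_2\ge 2$, and the node class $\d_{0;1,0}$, hit by $D$ $(-d_1)$ and $W$ $(-w_2)$, forces $\beta d_1+\gamma w_2\ge 3$, the right-hand $3$ being the ramification contribution in \eqref{K}. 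I would then feed the reduced system --- with the combinatorial values $w_\psi,w_2,a$ from \eqref{w psi}, \eqref{w2} and the weights $(m,k,r)$ fixed by $g=mk+r$ --- to a computer algebra system and read off that it is solvable exactly on the band $n_{\mathrm{min}}(g)\le n\le n_{\mathrm{max}}(g)$; the inflated $n_{\mathrm{min}}$ at the primes reflects the weaker coefficients of $E,F$, and these lower-end feasibilities are governed by the $\l$- and $\dirr$-inequalities rather than by the $|S|=2$ ones.

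The hard part will be the upper cut-off, i.e.\ proving the system \emph{infeasible} for $n>n_{\mathrm{max}}(g)$. Writing $x=\beta d_0\ge0$ and $y=\gamma w_\psi\ge0$, the constraint $s>0$ reads $x+y<1$, while the two $|S|=2$ inequalities become $x+\sigma y\ge 2$ and $\rho x+\sigma y\ge 3$ with $\rho=d_1/d_0=6\frac{g+n-1}{g+n+1}$ and $\sigma=w_2/w_\psi$. Since $B$ (or $E$) contributes nothing to the $i=0$ boundary, $\alpha$ is useless here, and eliminating $y$ shows these are compatible with $x+y<1$ only when $\frac{1}{\rho-1}<\frac{\sigma-2}{\sigma-1}$. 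The structural reason the system must fail for large $n$ is that the boundary-to-$\psi$ efficiency of the Weierstrass divisor decays to $2$: a coefficient count in \eqref{w psi}, \eqref{w2} gives $w_2-2w_\psi=O(n^{m-2})$ against $w_\psi=O(n^{m-1})$, so $\sigma\to 2^{+}$ and $\frac{\sigma-2}{\sigma-1}\to0$ while $\frac{1}{\rho-1}\to\frac15$. Optimizing over weights (the balanced choice $m=g$, $k=1$, $r=0$ gives $\sigma=2+\frac{g-1}{2n-1}$) turns the feasibility condition into $(g+n+1)(g+2n-2)<(g-1)(5g+5n-7)$, i.e.\ $2n^2+(5-2g)n-(4g^2-11g+9)<0$, whose largest admissible integer is exactly $2(g-2)$, matching the table. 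This is precisely the deficiency flagged in the Introduction --- $W$ failing $w_2>3w_\psi$ for large $n$ --- and it is what the Minimal Resolution divisors $U,V$ are later used to repair in Section \ref{special}.
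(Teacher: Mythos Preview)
Your proposal is correct and follows essentially the same approach as the paper: reduce to the five key inequalities coming from $\l,\psi,\dirr,\d_{0;1,0},\d_{0;0,2}$, verify feasibility by computer algebra, and derive the upper cut-off $n_{\mathrm{max}}=2g-4$ analytically from the three-inequality subsystem $(\psi,\d_{0;1,0},\d_{0;0,2})$. The paper makes the case split $2n<g$ versus $2n\ge g$ more explicit (in particular taking $\beta=0$ when $2n\le g-2$ and handling $2n=g-1$ separately with an $\epsilon$-perturbation), but your rescaled condition $1/(\rho-1)<(\sigma-2)/(\sigma-1)$ is exactly the paper's condition \eqref{condition system ineq} after dividing through by $d_0 w_\psi$, and both arrive at the same quadratic in $n$ with largest integer solution $2(g-2)$.
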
 

Then we will use the divisor classes $U$ and $V$ to improve the upper bound $n_{\mathrm{max}}$ and extend our result to $g=5,6$.

\begin{proposition}   \label{prop two}
$\Ngn$ is of general type in the special cases 
$5 \leq g \leq 23,$  and\\
$n_{\mathrm{min}}(g) \leq n \leq  n_{\mathrm{max}}(g)$ 
with  $n_{\mathrm{min}}(g), n_{\mathrm{max}}(g)$  given in the following table:
$$\begin{array}{c|c|c|c|c|c|c|  c|c|c|c|c|  c|c|c|c|c|  c|c|c}
g &5&6&7&8&9&10&11&12&13&14&15&16&17&18&19&20&21&22&23\\
\hline
n_{\mathrm{min}}&9&9&8&8&8&8&6&7&6&6&6&6&5&6&4&4&3&4&1\\
n_{\mathrm{max}}&10&14&18&21&25&28&32&35&38&42&46&49&52&56&60&63&66&70&74
\end{array}$$
\end{proposition}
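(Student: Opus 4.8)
The plan is to establish Proposition \ref{prop two} by starting from the sufficient condition developed in Section \ref{sc}: namely, $\Ngn$ is of general type provided the canonical pullback $K=\pi^*(K_{\Ngn})$ from \eqref{K} can be written as a positive multiple of $\psi$ plus a non-negative linear combination of effective $G$-invariant divisor classes on $\Mgnn$. Since Proposition \ref{prop one} already disposes of the cases $7 \leq g \leq 23$ with the smaller $n_{\mathrm{max}}(g)$ using the Weierstrass divisor $W$, the real content here is twofold: extending the upper cut-off $n_{\mathrm{max}}(g)$ and covering the new values $g=5,6$. For this I would replace $W$ by the more efficient Minimal-Resolution-Conjecture divisors $U$ (for $g$ odd, via \eqref{U}) and $V$ (for $g$ even, via \eqref{V}), which carry a larger $\psi$-coefficient relative to their boundary coefficients and thus tolerate more nodes.

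The key computational step is to set up, for each fixed $(g,n)$ in the target range, the linear program expressing $K$ as
\begin{equation}
K = t\,\psi + \alpha\, B + \beta\, D + \gamma\, U + (\text{non-negative combination of } \l, \dirr, \text{ and the } \d_{i;a,b}),
\end{equation}
with the Brill-Noether divisors $B,D$ from \eqref{B}, \eqref{D} used when $g+1$ and $g+n+1$ are composite, replaced by the Gieseker-Petri divisors $E,F$ from \eqref{E}, \eqref{F} when one of these is prime, and with $U$ (or $V$) substituted for $W$. Matching coefficients of $\l$, $\psi$, $\dirr$, and each $\d_{i;a,b}$ separately yields a finite system of linear inequalities in the non-negative multipliers $t>0$, $\alpha,\beta,\gamma \geq 0$. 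The subtle point is the $\d_{0;a,b}$ coefficients: because the ramification class $\d_{0;1,0}$ is subtracted in \eqref{K} and because $U,V$ contribute $\psi$ through the change of basis $\o = \psi - \sum_S |S|\d_{0,S}$ recorded in \eqref{basechange}, one must verify that the coefficient $v_{0,2}$ (respectively $u_{0,2}$) is large enough relative to $v_\psi$ to absorb the boundary terms left over after the $\psi$-multiple is extracted. I would carry out this verification uniformly, then feed the resulting inequalities to computer algebra to confirm solvability at each tabulated $(g,n)$, as the paper indicates.

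The main obstacle I expect is the low-genus extension to $g=5,6$ together with the limiting large-$n$ cases, where the margins in the inequalities are tightest. For $g=5,6$ the divisors $U,V$ have small $\psi$-coefficients, and the MRC divisor requires $m \geq g+1$, forcing careful tracking of the integer parameters $r,k$ determined uniquely by $(g,n)$; one must check that a valid choice with $k \leq g-2$ exists and that the induced bound on $a_{0,s}$ (namely $a_{i,s}\geq a_{0,s}$) suffices to control all higher boundary divisors. For the upper cut-off near $n_{\mathrm{max}}(g)=\frac{7}{2}(g-1)-3$, the binding constraint is precisely the $|S|=2$ boundary coefficient, reflecting the remark in the introduction that improvement would require a new divisor with enhanced $b_{0,S}$ for $|S|=2$; I anticipate that at exactly $n_{\mathrm{max}}(g)$ the relevant inequality becomes an equality or barely fails, which is why a handful of boundary cases (the nine special cases deferred to Section \ref{special}) resist the uniform argument and must be handled separately. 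The remaining steps — collecting coefficients and confirming non-negativity — are routine once the divisor substitution and the $\d_{0;2}$ bookkeeping are correctly organized.
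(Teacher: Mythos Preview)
Your approach is essentially identical to the paper's: replace $W$ by the MRC divisors $U$ (for $g$ odd) or $V$ (for $g$ even), swap in $E,F$ for $B,D$ when $g+1$ or $g+n+1$ is prime, reduce to the same system of linear inequalities in the coefficients of $\l,\psi,\dirr,\d_{0;1,0},\d_{0;0,2}$, and verify solvability case by case via computer algebra, falling back on Proposition~\ref{prop one} when $n$ is too small for $U,V$ to exist. One small correction to your closing remarks: the nine special cases deferred to Section~\ref{special} are not about the upper cut-off $n_{\mathrm{max}}$ but about pushing $n_{\mathrm{min}}$ below what Proposition~\ref{prop two} delivers (e.g.\ $\cN_{10,6}$, $\cN_{12,6}$, $\cN_{22,2}$), so your anticipation that they arise from the $|S|=2$ constraint at $n=n_{\mathrm{max}}$ is off target, though this does not affect your argument for Proposition~\ref{prop two} itself.
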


Note that the lower bound $n_{\mathrm{min}}$ differs from Theorem \ref{two} only in the cases $g=5,6,7$. For $g=5$ and $g=6$ this is due to the fact that the preliminary upper bound actually lies below the lower bound.\\

\begin{proof} [Proof of Proposition \ref{prop one}] For $g+1$ and $g+n+1$ both composite, we will search for a non-negative linear combination of the divisors $B,D$ and $W$ such that 
$$K-xB-yD-zW$$ 
(possibly up to an arbitrarily small multiple $\epsilon \psi$, see equations
\eqref{1}, \eqref{2} below  and  our discussion of Case II)
becomes an effective combination of the tautological and boundary classes. (Whenever $g+1$ is prime we replace $B$ by $E$ and whenever $g+n+1$ is prime we replace $D$ by $F$.) This will translate into a system of inequalities, one for each tautological or boundary class. However it is easy to check that only the inequalities imposed by $\l,\psi,\dirr,\d_{0;10}$ and $\d_{0;02}$ are relevant. The others are automatically satisfied as soon as these 5 are. 

The table of relevant coefficients is:

\begin{equation}  \label{table}
\begin{array}{c|c|c|c|c|c}
  		& \l			&\psi			&\dirr		&\d_{0;1,0}	 & \d_{0;0,2}		\\
  		\hline 
 B	& b_\l	&0		& -b_0	& 0		& 0 	\\
 D	& d_\l	&d_0 	& -d_0  &-d_1 	& -d_0  \\
 E  & e_\l  &0 		& -e_0 	& 0 	&0 		\\
 F 	& f_\l 	&f_0 	& -f_0 	&-f_1 	& -f_0 	\\
 W	& -w_\l &w_\psi & 0 	&-w_2	&-w_2	\\
 K	&13		&1		&-2		&-3		&-2		\\
\end{array}
\end{equation}
with 
$$b_\l=g+3, \qquad b_0=\frac{g+1}{6},$$

$$d_\l=g+3, \qquad d_0=\frac{g+1}{6},\qquad d_1=g+n-1, $$

$$e_\l=6(\frac{g}{2}+1)^2+\frac{g}{2}-5, \quad
e_0=(\frac{g}{2}+1)\frac{g}{2},$$ 

$$f_\l=6(\frac{g+n}{2}+1)^2+\frac{g+n}{2}-5, \quad
f_0=(\frac{g+n}{2}+1)\frac{g+n}{2}, \quad
f_1=(g+n-1)(\frac{3g+3n}{2}+1), $$
and $w_\l, w_\psi, w_2$ from \eqref{w lambda}, \eqref{w psi} and \eqref{w2}.

When looking at the divisor $W$ defined in equation \eqref{W} we have the condition $m\leq\min (2n,g)$. For the remainder of the paper we shall always set $m=\min(2n,g)$ and $g=mk+r$ with $r<m$.
We shall treat separately the (easy)  

Case I: $2n < g$ (which we shall split into $2n \leq g-2$ and $2n=g-1$) 

and the (difficult) 

Case II: $g\leq 2n$. 

Looking at the coefficients $w_\psi$ and $w_2$  of this divisor  (see \eqref{w psi} and\eqref{w2}) it is easy to show that $w_2>3w_\psi$ for $2n\leq g-2$, $w_2=3w_\psi$ for $2n=g-1$ or $2n=g$ and $w_2<3w_\psi$ for $2n\geq g$. This motivates treating $2n=g-1$ separately.\\

For $2n\leq g-2$ and $g+1$ composite, we have $3w_\psi<w_2$ and therefore the optimal decomposition is $x=\frac{2}{b_0}, \quad y=0, z=\frac{3}{w_2}$. This is exactly the same decomposition Logan uses to prove $\Mgn$ being of general type. It satisfies the inequalities imposed by $\dirr$ and $\d_{0;10}$ as equalities and also satisfies the inequalities imposed by $\psi$ and $\d_{0;0,2}$. Only the inequality imposed by $\l$ remains to be checked:
\begin{equation} \label{lambda}
(\l): \qquad \frac{2b_\l}{b_0}-\frac{3w_\l}{w_2}\leq 13
\end{equation}
Whenever this is true for a given $g$ and $n$, we get the decomposition 
\begin{equation}\label{decomposition 1}
 K=\frac{2}{b_0}B+\frac{3}{w_2}W +(1-\frac{3w_\psi}{w_2})\psi +\mbox{"effective"}.
\end{equation}

For $2n\leq g-2$ and $g+1$ prime (and thus $g\geq 28$), we replace $B$ by $E$ in \eqref{lambda}.  Thus, whenever we have 
\begin{equation} \label{cond}
(\l): \qquad \frac{2e_\l}{e_0}-\frac{3w_\l}{w_2}\leq 13,
\end{equation}
 we actually get the desired decomposition 
\begin{equation}\label{decomposition 1 prime}
 K=\frac{2}{e_0}E+\frac{3}{w_2}W +(1-\frac{3w_\psi}{w_2})\psi +\mbox{"effective"},
\end{equation}  
proving that $\Ngn$ is of general type. Checking the inequality \eqref{cond} by use of a small computer program establishes the table for $2n \leq g-2$.\\

For $2n=g-1$ the above decomposition no longer works, because then $3w_\psi=w_2$, 
giving a vanishing coefficient of $\psi$. We therefore need to use the divisor $D$ for $g+n+1$ composite and $F$ for $g+n+1$ prime. Note that $g+1=2n+2$ is composite.
Whenever 
\begin{equation} \label{lambda1}
(\l): \qquad \frac{2b_\l}{b_0}-\frac{3w_\l}{w_2}< 13
\end{equation}
we can use the decomposition
\begin{equation} \label{1}
K=\frac{2}{b_0}B+\frac{2\epsilon}{d_0}D+\frac{1-3\epsilon}{w_\psi}W +\epsilon\psi +\mbox{"effective"}
\end{equation}
or
\begin{equation} \label{2}
K=\frac{2}{b_0}B+\frac{3\epsilon}{f_0}F+\frac{1-4\epsilon}{w_\psi}W +\epsilon\psi +\mbox{"effective"}
\end{equation}
for some $\epsilon >0 $ sufficiently small.

This proves Proposition
\ref{prop one} in Case I.\\

We shall now treat the  difficult Case II:   $2n\geq g$. Let us begin by assuming both $g+1$ and $g+n+1$ are composite. We start similarly to Case I, by trying to decompose the canonical class as 
$$K=xB+yD+zW+\epsilon\psi+\mbox{"effective"}$$
for some non-negative $x,y,z$ and positive $\epsilon$.

The coefficients of $W$ become especially easy:
$$w_\l=\binom{2n}{g}, \qquad w_\psi=\binom{2n-1}{g-1}, \qquad w_2=2\binom{2n-1}{g-1} +\binom{2n.-2}{g-2}.$$

We shall first show that there is a finite upper bound for $n$, 
i.e.  for fixed $g$ and $n$ sufficiently large, the conditions imposed by $\psi,\d_{0;1,0}$ and $\d_{0;0,2}$ cannot be satisfied simultaneously. 
In fact, let us consider the corresponding system of 3 linear inequalities for the variables $y,z$, read off the table \eqref{table}:
\begin{equation} \label{systemineq}
\begin{split}
(\psi):      	\hspace{1,1cm}	 d_0 y + w_\psi z       &< 1   \\
(\d_{0;1,0}): 	\qquad 		-d_1 y -w_2 z  			&\leq -3  \\
(\d_{0;0,2}): 	\qquad		-d_0 y -w_2 z			&\leq -2
\end{split}
\end{equation}

We apply   Gaussian elimination, allowing only positive multiples of the 
 inequalities in \eqref{systemineq}. Note that (for $2n \geq g$) any solution $y,z$ of 
\eqref{systemineq}  automatically is positive: $(\psi) + (\d_{0;0,2})$ implies $z>0$ and $3 (\psi) + (\d_{0;1,0]})$ then gives $y>0.$
 Then the inequality $(\psi)+ (\d_{0;0,2})$ gives 
\begin{equation}\label{lower bound z}
z > \frac{1}{w_2-w_\psi}\geq \frac{1}{2w_\psi}
\end{equation}
and inequality $d_1(\psi)+d_0(\d_{0;1,0})$ gives 
\begin{equation}
z < \frac{d_1-3d_0}{d_1 w_\psi- d_0 w_2}.
\end{equation}

This is solvable if and only if 
\begin{equation}\label{condition system ineq}
(d_1-3d_0)(w_2-w_\psi)-(d_1 w_\psi -d_0 w_2)>0.
\end{equation}

Consider
\begin{equation}
\begin{split}
p_g(n)&:=(12n-6)\binom{2n-1}{g-1}^{-1} \Big((d_1-3d_0)(w_2-w_\psi)-(d_1 w_\psi -d_0 w_2)\Big)\\
	&=-2n^2+(2g-5)n+g^2-11g+9	
\end{split}
\end{equation}
as a polynomial in $n$. Then $p_g(n)>0$ if and only if
\begin{equation}
\frac{1}{4}(2g-5-\sqrt{97-108g+36g^2}) <n< \frac{1}{4}(2g-5+\sqrt{97-108g+36g^2}).
\end{equation}

As we are only interested in solutions with  $n \in \N,$ we get

\begin{equation}
n \leq \frac{1}{4}(2g-5+\sqrt{81-108g+36g^2})= 2g-4.
\end{equation}
This establishes the existence of an upper bound for $n$, as claimed above.

Now let us assume, that $\frac{g}{2}\leq n\leq 2g-4$ and that $(y,z)$ is a solution of the three inequalities \eqref{systemineq} (preferably with z as large as possible).
 We set 
$$x=\frac{1}{b_0}(2-d_0 y)$$ 
and show that $(x,y,z)$ satisfies also the two remaining inequalities:

\begin{equation} 
\begin{split}
(\l): \qquad  b_\l x+d_\l y-w_\l z &\leq 13 \\
(\dirr): \hspace{1,8cm} -b_0 x -d_0 y   &\leq -2
\end{split}
\end{equation}

In fact $(\dirr)$ will be satisfied as equality, and we are left with checking $(\l)$.

If $g+1$ is prime we replace $B$ by $E$ and for $g+n+1$ prime we replace $D$ by $E$. As there are only finitely many cases to check, we have done this 
by explicitly computing the decomposition with the help of a simple program in Mathematica.
 This proves all the results  tabled in Proposition \ref{prop one}.
\end{proof}

In order to show Proposition \ref{prop two} we replace the divisor $W$ by either $U$ (for $g$ odd) or by $V$ (for $g$ even). 

\begin{proof}[Proof of Proposition \ref{prop two}]
As  a general remark, note that the divisors $U,V$ do not exist if $n$ is too small, depending on $g$, i.e. for $2n < g-4$. In this case, we simply use Proposition \ref{prop one} to generate the table in Proposition \ref{prop two}.

In all other cases we want to decompose $K$ as a 
sum of $xB$ or $xE$, $yD$ or $yF$, $zU$ or $zV$ with non-negative $x,y,z$, some positive multiple $\epsilon \psi$ of the point bundles and an effective combination of the tautological classes and boundary divisors.
So let us consider the table of relevant coefficients
\begin{equation}  \label{table2}
\begin{array}{c|c|c|c|c|c}
  		& \l			&\psi			&\dirr		&\d_{0;1,0}	 & \d_{0;0,2}		\\
  		\hline 
 B	& b_\l	&0		& -b_0			& 0		& 0 	\\
 D	& d_\l	&d_0 	& -d_0  		&-d_1 	& -d_0  \\
 E  & e_\l  &0 		& -e_0 			& 0 	&0 		\\
 F 	& f_\l 	&f_0 	& -f_0 			&-f_1 	& -f_0 	\\
 U	& -u_\l &u_\psi & u_{\mathrm{irr}}	&-u_{0,2}&-u_{0,2}	\\
 V	& -v_\l &v_\psi & v_{\mathrm{irr}}	&-v_{0,2}&-v_{0,2}	\\
 K	&13		&1		&-2		&-3		&-2		\\
\end{array}
\end{equation}
where $b_\l,b_0$ are given in \eqref{B}, $d_\l,d_0,d_1$ in \eqref{D}, $e_\l,e_0$ in \eqref{E}, $f_\l,f_0,f_1$ in \eqref{F}, $u_\l$, $u_\psi$,$u_{\mathrm{irr}}$,$ u_{0,2}$ in \eqref{U} and $v_\l, v_\psi,v_{\mathrm{irr}}, v_{0,2}$ in \eqref{V}.

$\l,\psi,\dirr,\d_{0;1,0},d_{0;0,2}$ each determine a linear inequality that can be read off from the table. 
Analogous to \eqref{systemineq} we begin by considering the system of inequalities 

\begin{equation} \label{systemieq odd composite}
\begin{split}
(\psi):      	\hspace{1,35cm}	 d_0 y + u_\psi z       &< 1   \\
(\d_{0;1,0}): 	\qquad 		-d_1 y -u_{0,2} z  			&\leq -3  \\
(\d_{0;0,2}): 	\qquad		-d_0 y -u_{0,2} z			&\leq -2.
\end{split}
\end{equation}

This corresponds to the case $g+1$ composite, $g+n+1$ composite and $g$ odd. It is one of 6 possible cases and will be the only one which we shall discuss in detail.

By Gaussian elimination (compare \eqref{condition system ineq})  this is solvable, if and only if
\begin{equation}
\begin{split}
0 	&\leq p(g,n,r,k):=6\Big((d_1-3d_0)(u_{0,2}-u_\psi)-(d_1 u_\psi -d_0 u_{0,2})\Big)  \\
	&=9 - 12 g + 3 g^2 + k + g k - 3 n + 3 g n + k n + r - g^2 r + n r - 
 g n r
\end{split}
\end{equation}
This inequality defines the upper cut-off in the table of Proposition \ref{prop two} (in the special case considered here), which improves the table of Proposition \ref{prop one}. We the have to show that for all these values of $g$ and $n$ we actually get a solution of our system of inequalities. This we did by computing, in all these cases, an explicit solution using Mathematica.

All other cases are treated in a similar way: For $g$ even,  the divisor $U$ has to be replaced by $V$, if $g+n+1$ is prime, one has to replace $D$ by $F$, and if $g+1$ is prime, $B$ nedds to by replaced by $E$. In each of these cases  one gets a corresponding system of linear inequalities and a neccessary upper cut-off for $n$. This upper bound defines $n_{\mathrm{max}}(g)$ in our table. 
As above, we have then computed a solution of the ensuing system of inequalities by Mathematica, for all relevant values of  $g$ and $n$.
This gives the improved table in Proposition  \ref{prop two}.

\end{proof}

\section{Proof of Theorem \ref{two}: Special computations} \label{special}
\setcounter{equation}{0}

The aim of this section is to prove Theorem \ref{two}. In view of Propostion \ref{prop two} this boils down to specific calculations for each of the values of $(g,n)$ contained in the table of Theorem \ref{two}, but not in the table of Proposition \ref{prop two}. In each case, we shall need additional divisors, specifically adapted to the case at hand. Our standard references for these divisors are \cite{l1}, \cite{f}.\\

\textbf{$\mathcal{N}_{10,6}$ and $\mathcal{N}_{10,7}$} \\
We use the divisor class $\mathcal{Z}_{10,0}$ on $\M_{10}$ from \cite{f},  Theorem 1.4. This is a divisor that violates the slope conjecture, i.e. it has a smaller slope than the Brill-Noether divisor. We have $s(\mathcal{Z}_{10,0})=7$, i.e $\mathcal{Z}_{10,0}=c( 7\l-\dirr -\sum_i a_i \d_i)$ on $\M_{10}$.
Take $Z=\phi^*(\mathcal{Z}_{10,0})$, the pull-back via the forgetful map $\phi:\M_{10,2n}\to\M_{10}$.
Then for $n=6$ we can decompose $K$ as an effective combination of $Z,F,W$,  some positive multiple of $\psi$ and an effective combination of the tautological and boundary classes.
For $n=6$ we can decompose $K$ as an effective combination of $Z,D,W$, some  positive multiple of $\psi$ and an effective combination of the tautological and boundary classes.\\

\textbf{$\mathcal{N}_{21,2}$} \\
We use the divisor class $\mathcal{Z}_{21,0}$ on $\M_{21}$ from \cite{f} Theorem 1.4. This divisor has slope $s(\mathcal{Z}_{10,0})=\frac{2459}{377}$, which is smaller than the slope of the Brill-Noether divisor. We set $Z=\phi^*(\mathcal{Z}_{21,0})$ the pull-back via the forgetful map $\phi:\M_{10,4}\to\M_{10}$.
The canonical divisor will decompose as an 
effective combination of $Z,W$ some positive multiple of $\psi$ and an effective combination of the tautological and boundary classes.\\

\textbf{$\mathcal{N}_{16,5}$} \\
Similarly to the above divisors, we use the divisor class $\mathcal{Z}_{16,1}$ on $\M_{16}$ from \cite{f}, Corollary 1.3. This divisor has slope $s(\mathcal{Z}_{10,0})=\frac{407}{61}$, which once again is smaller than the slope of the Brill-Noether divisor. We set $Z=\phi^*(\mathcal{Z}_{16,1})$, the pull-back via the forgetful map $\phi:\M_{16,10}\to\M_{16}$.
The canonical divisor will decompose as an 
effective combination of $Z,W$, some positive multiple of $\psi$ and an effective combination of the tautological and boundary classes.\\

\textbf{$\mathcal{N}_{12,6}$} \\
Here we use the divisor class $\mathcal{D}_{12}=13245\l-1926\dirr-9867\d_1-\ldots $ on $\M_{12}$ from \cite{fv4} and pull it back along the forgetful map $\phi:\M_{12,12}\to\M_{12}$.
The canonical divisor will decompose as an 
effective combination of $\phi^*(\mathcal{D}),F,W$ some positive multiple of $\psi$ and an effective combination of the tautological and boundary classes.\\

\textbf{$\mathcal{N}_{22,2}$ and $\mathcal{N}_{22,3}$} \\
For $g=22$ there are no Brill-Noether divisors, because $22+1$ is prime. Instead of using the divisor class $E$ we take a Brill-Noether divisor $\mathcal{B}_{23}=c( 26\l-4\dirr-22\d_1-\ldots) $ on $\M_{23}$. We pull this divisor back along $\chi:\M_{22,2}\to \M_{23}$ and then along all possible forgetful maps $\phi_S: \M_{22,4}\to \M_{22,2}$ or $\M_{22,6}\to\M_{22,2}$.

On $\M_{22,4}$, adding all these pull-backs, gives us the divisor class $L_{22,4}=12c(13\l+\psi-2\dirr-\frac{10}{3}\sum_{|S|=2}\d_{0,S}-\ldots)$. This divisor class alone proves the canonical divisor $K$ to be effective. By using a linear combination with $\epsilon_1 E, \epsilon_2 W$ for some suitable $\epsilon_1,\epsilon_2>0$ we can show $K$ to be big.

On $\M_{22,6}$, adding all these pull-backs, gives us the divisor class $L_{22,6}=30c(13\l+\frac{2}{3}\psi-2\dirr-\frac{56}{30}\sum_{|S|=2}\d_{0,S}-\ldots)$.
Using $L_{22,6}$ and $W$ we can show $K$ to be big.\\

\textbf{$\mathcal{N}_{14,5}$} \\
We can show that $\mathcal{N}_{14,5}$ is of general type by using the divisor classes $B$ and $\mathfrak{Nfold}^1_{14,12} = c(35\l+54\psi-10\dirr-173\sum_{|S|=2}\d_{0,S}-\ldots)$ from \cite{f}, Theorem 4.9.\\

\textbf{$\mathcal{N}_{18,5}$} \\
We  take the divisor class $\mathfrak{Lin}^8_{24}=c(290\l+24\psi-45\dirr -82\sum_{|S|=2}\d_{0,2}-\ldots)$ on $\M_{18,9}$ from \cite{f}, Theorem 4.6.
We pull this divisor class back along all forgetful maps $\phi_i: \M_{18,10} \to \M_{18,9}$. Using the sum of these pullbacks $D$ and $W$, we can show that  $\mathcal{N}_{18,5}$ is of general type.\\

\end{document}